\theoremstyle{plain}
\newtheorem{theorem}{Theorem}
\newtheorem{proposition}{Proposition}
\theoremstyle{definition}
\newtheorem{definition}[theorem]{Definition}
\theoremstyle{remark}
\newtheorem*{remark}{Remark}
\newtheorem{example}{Example}
\numberwithin{equation}{section}
\numberwithin{theorem}{section}
\numberwithin{proposition}{section}
\numberwithin{figure}{section}
\numberwithin{example}{section}
\newcommand{\cA}{{\mathcal{A}}}
\newcommand{\cE}{{\mathcal{E}}}
\newcommand{\cF}{{\mathcal{F}}}
\newcommand{\cG}{{\mathcal{G}}}
\newcommand{\cI}{{\mathcal{I}}}
\newcommand{\cJ}{{\mathcal{J}}}
\newcommand{\cL}{\mathcal{L}}
\newcommand{\cN}{\mathcal{N}}
\newcommand{\cU}{\mathcal{U}}
\newcommand{\cX}{\mathcal{X}}
\newcommand{\CC}{{\mathbb{C}}}
\newcommand{\DD}{{\mathbb{D}}}
\newcommand{\RR}{{\mathbb{R}}}
\newcommand{\TT}{{\mathbb{T}}}
\newcommand{\XX}{{\mathbb{X}}}
\newcommand{\ZZ}{{\mathbb{Z}}}
\newcommand{\sfg}{\mathsf{g}}
\newcommand{\sfG}{\mathsf{G}}
\newcommand{\sfGL}{\mathsf{GL}}
\newcommand{\sfO}{\mathsf{O}}
\newcommand{\sfS}{\mathsf{S}}
\newcommand{\sfT}{\mathsf{T}}
\newcommand{\sfU}{\mathsf{U}}
\newcommand{\fD}{{\mathfrak{D}}}
\newcommand{\fJ}{{\mathfrak{J}}}
\newcommand{\fL}{{\mathfrak{L}}}
\newcommand{\fX}{{\mathfrak{X}}}
\newcommand{\pp}{{\mathbbm p}}
\newcommand{\Ann}{{\text{Ann}}}
\newcommand{\End}{{\mathsf{End}}}
\begin{document}

\title[Generalised contact geometry]
{Generalised contact geometry as reduced generalised Complex geometry}

\author[K Wright]{Kyle Wright}
\address[K Wright]{
Department of Theoretical Physics,
Research School of Physics and Engineering, and
Mathematical Sciences Institute,
Australian National University, 
Canberra, ACT 2601, Australia}
\email{wright.kyle.j@gmail.com}

\maketitle  

\begin{abstract}
	Generalised contact structures are studied from the point of view of reduced generalised complex structures, naturally incorporating non-coorientable structures as non-trivial fibering. The infinitesimal symmetries are described in detail, with a geometric description given in terms of gerbes. As an application of the reduction procedure, generalised coK\"ahler structures are defined in a way which extends the K\"ahler/coK\"ahler correspondence. 
\end{abstract}
\section{Introduction}

Generalised geometry, introduced by Hitchin in \cite{Hit02} and developed in \cite{Gua04}, has proved to be a very successful extension of differential geometry on the generalised tangent bundle $TM\oplus T^*M$.  Generalised complex structures, defined on even
dimensional manifolds,  unify and interpolate
between symplectic and complex structures.  Much of the interest in generalised geometry, particularly in relation to T-duality in string theory, is due to the enlarged symmetry group of the structures on $TM\oplus T^*M$ and the associated deformations.  

The odd-dimensional counterpart, generalised contact geometry, is less well developed (see however \cite{Vai07,Vai08,Poo09}), particularly in the description of the associated symmetries.  This note presents generalised contact structures as $\sfS^1$-invariant reductions of generalised complex structures.  A geometric interpretation of twisted generalised contact structures is given through bundle gerbes and splitting the associated vector bundle. 

The content of the note is the following:  In Section \ref{Gengeom} generalised geometry on the generalised tangent bundle $E\cong TM\oplus T^*M$ is reviewed.  Section \ref{gencontgeom} introduces the generalised geometry associated with $E\cong TM\oplus\RR\oplus\RR\oplus T^*M$, extending the treatment in \cite{Igl01} and \cite{Gra13}, to include the full set of symmetries.  Generalised contact structures are described as $\sfS^1$-invariant reductions of generalised complex structures in Section \ref{gencontactstruc}.  The extended $\kappa$-symmetries noted by Sekiya \cite{Sek12} correspond to reductions of non-trivial $\sfS^1$-bundles.     In Section \ref{genSasakigeom} twisted generalised coK\"ahler structures are described as reductions of generalised K\"ahler structures.  This gives an generalised analogue of the correspondence between coK\"ahler structures on $M$ and K\"ahler structures on a principal circle bundle $\sfS^1\hookrightarrow P\rightarrow M$.  The role of the extended symmetries are discussed in the context of T-duality in Section \ref{Tduality}. The main result being that generalised coK\"ahler structures are mapped to other generalised coK\"ahler structures under T-duality.  Finally, in Section \ref{Linebundle}, the relationship between the twisted contact structures in this note, and geometry on the generalised derivation bundle $\DD L\cong \fD L\oplus \fJ^1L$ (introduced in \cite{Vit15b}) is given.

\section{Generalised tangent spaces and Courant algebroids}\label{Gengeom}

Generalised geometry is the study of geometric structures on a vector bundle equipped with an algebroid structure. Courant algebroids underly the generalised geometries associated with generalised complex structures and generalised contact structures.
\begin{definition}
	A \emph{Courant algebroid} is a quadruple $(E,\circ,\langle\cdot,\cdot\rangle,\rho)$, where $E\rightarrow M$ is a vector bundle, $\circ:\Gamma(E)\times \Gamma(E)\rightarrow \Gamma(E)$ is a Dorfman product, $\langle\cdot,\cdot\rangle$ a non-degenerate metric, and $\rho:E\rightarrow TM$ is an anchor, satisfying:
	\begin{align}
	e_1\circ(e_2\circ e_3)=&(e_1\circ e_2)\circ e_3+e_2\circ (e_1\circ e_3)\label{Leibniziden};\\
	\rho(e_3)\langle e_1,e_2\rangle=&\langle e_3\circ e_1,e_2\rangle+\langle e_1,e_3\circ e_2\rangle;\\
	e_1\circ e_1=&D\langle e_1, e_1\rangle;
	\end{align}
	for $e_i\in\Gamma(E)$, where $D$ is defined by $\langle Df,e_1\rangle=\frac{1}{2}\rho(e_1)f$.
\end{definition}

Standard generalised geometry is the study of geometric structures on the generalised tangent bundle $E\rightarrow M$ given by the following exact sequence:
\begin{align}\label{Gentangentsequence}
\xymatrix{0\ar[r]& T^*M \ar[r]^{\rho^*}  & E \ar[r]^{\rho} \ar@/^1pc/[l]^{s^*} & TM \ar@/^1pc/[l]^s \ar[r] & 0}.
\end{align}
Courant algebroids on $E$, specified by \eqref{Gentangentsequence}, are called \emph{exact Courant algebroids}.  Every exact Courant algebroid admits a splitting $s:TM\rightarrow E$, satisfying $\rho s=\text{id}$, and is isotropic $\langle s(X_1),s(X_2)\rangle=0$ for all $X_1,X_2\in\Gamma(TM)$.  Two exact Courant algebroids are equivalent if they differ by a choice of isotropic splitting.  

A choice of splitting defines an isomorphism $E\cong s(TM)\oplus \rho^*(T^*M):=\TT M$.  Using the identification $\Gamma(e)=s(X)+\rho^*(\xi):=(X,\xi)$, for $X\in\Gamma(TM)$ and $\xi\in \Gamma(T^*M)$, the standard Courant algebroid is given by
\begin{subequations}\label{StandardCourant}
	\begin{align}
	(X_1,\xi_1)\circ_H(X_2,\xi_2)=&\Big([X_1,X_2],\cL_{X_1}\xi_2-\iota_{X_2}\xi_1-\iota_{X_1}\iota_{X_2}H\Big);\label{Dorf}\\
	\langle(X_1,\xi_1),(X_2,\xi_2)\rangle=&\frac{1}{2}(\iota_{X_1}\xi_2+\iota_{X_2}\xi_1);\\
	\rho(X,\xi)=&X,
	\end{align}
\end{subequations}
where $H\in\Omega^3_{\text{cl}}(M)$, is given by
\begin{align*}
H(X_1,X_2)=s^*(s(X_1)\circ s(X_2)),
\end{align*}
(for details see \cite{Bou10}).  The Leibniz identity for $\circ_H$ \eqref{Dorf}, gives the Maurer-Cartan identity $dH=0$.  The Dorfman product, $\circ_H$, is natural in the sense that it is the derived bracket of $d_H:=d+H\wedge$ (with $d$ the de Rham differential), acting on $\Gamma(\wedge^\bullet T^* M)$ \cite{Kos04}. 

Consider splitting \eqref{Gentangentsequence} with two different isotropic splittings $s_i:TM\rightarrow E$, $i=1,2$, satisfying $\rho(s_1-s_2)=0$.  Exactness implies that there exists a unique $B\in\Omega^2(M)$ satisfying $s_1(X)-s_2(X)=\rho^*(B(X))$, for all $X\in\Gamma(TM)$.  It can be shown that
\begin{align*}
H_1-H_2=dB.
\end{align*}
Equivalent exact Courant algebroids are classified by $[H]\in H^3(M,\RR)$, a point first noted by \v{S}evera \cite{Sev98}.

The equivalence of the exact Courant algebroid under isotropic splittings corresponding to some $B\in \Omega^2(M)$ is closely related to the concepts of symmetries in generalised geometry structures.
 
\subsection{Courant algebroid symmetries}\label{symmetries}~\\
Perhaps the most interesting aspect of generalised geometry is the enhanced symmetry group.  The symmetry group of the Lie algebroid, given by the commutator of vector fields on $TM$, is $\mathsf{Diff}(M)$.  Exact Courant algebroids have a symmetry group given by $\mathsf{Diff}(M)\ltimes \Omega^2_{\text{cl}}(M)$ if $H=0$.
\begin{definition}
	A \emph{Courant algebroid symmetry} is a bundle automorphism $S:E\rightarrow E$ such that 
	\begin{equation}
	\langle S(\XX_1),S(\XX_2)\rangle=\langle \XX_1,\XX_2\rangle,\quad S(\XX_1)\circ S(\XX_2)=S(\XX_1\circ \XX_2).
	\end{equation}
\end{definition}
Given a diffeomorphism $f:M\rightarrow M$, the induced action on a section $(X,\xi)\in\Gamma(\TT M)$ is given by
\begin{align*}
X_p+\xi_p\rightarrow (T_pf)(X_p)+(T_{f(p)}f^{-1})^*(\xi_p).
\end{align*}
There is an infinitesimal action of $B\in\Omega^2(M)$ on $(X,\xi)\in \Gamma(\TT M)$, given by $B(X,\xi)=(0,\iota_X B)$.  The corresponding finite action, called a \emph{$B$-transformation}, is $e^B(X,\xi)=(X,\xi+\iota_XB)$, and satisfies 
\begin{align*}
\langle e^B(X_1,\xi_1),e^B(X_2,\xi_2)\rangle=&\langle (X_1,\xi_1),(X_2,\xi_2)\rangle;\\  e^B(X_1,\xi_1)\circ_He^B(X_2,\xi_2)=&e^B((X_1,\xi_1)\circ_{H+dB}(X_2,\xi_2)).
\end{align*}
If $H=0$ a $B$-transformation is a Courant algebroid symmetry iff $B\in\Omega^2_{\text{cl}}(M)$.  The Lie group composition of closed two forms is
\begin{align*}
e^{B'}e^{B''}=e^{B'+B''},\quad B'\cdot B''=B'+B''.
\end{align*}

The $H$-twisted exact Courant algebroid and the $B$-transformations have a close connection to $\sfU(1)$-gerbes.  If one requires that $H$ has integral periods, $[H/2\pi]\in H^3(M,\ZZ)$, then there is a gerbe describing the patching of $T^*M$ to $TM$, giving $\TT M$ \cite{Hit02}.

Consider a good cover of $M$, denoted $\cU=\cup_\alpha U_\alpha$, with $U_{\alpha\beta\dots\gamma}=U_\alpha\cap U_\beta\cdots\cap U_\gamma$.  A  gerbe is described by the cocycle $g_{\alpha\beta\gamma}=\exp(i\Lambda_{\alpha\beta\gamma})\in \sfU(1)$, a connection is given by $A_{\alpha\beta}\in\Omega^1(U_{\alpha\beta})$ and $B_\alpha\in\Omega^2(U_\alpha)$, satisfying
\begin{align*}
B_\beta-B_\alpha=&dA_{\alpha\beta}\quad \ {\text{on }}U_{\alpha\beta},\\
A_{\beta\gamma}-A_{\alpha\gamma}+A_{\alpha\beta}=&d\Lambda_{\alpha\beta\gamma}\quad {\text{on }}U_{\alpha\beta\gamma}.
\end{align*}
$H=dB_\alpha=dB_{\beta}$ on $U_{\alpha\beta}$ is independent of the cover and is a globally defined 3-form.

Given a representative of a class $[H]\in H^3(M,\RR)$ it is possible to reconstruct the bundle $E$.  Choose an open cover $\cU$ and a representative $H\in H^3_{\text{dR}}(M,\RR)$, which consists of a 4-tuple $(\Lambda_{\alpha\beta\gamma},A_{\alpha\beta},B_\alpha,H)$ in the \v{C}ech-de Rham complex (over $\RR$).  The bundle $E$ is constructed by the clutching construction
\begin{align*}
E=\bigsqcup_\alpha(TU_\alpha\oplus T^*U_\alpha)/\sim,
\end{align*}
identifying $(X_\alpha,\xi_\alpha)\in (\Gamma(TU_\alpha), \Gamma(T^*U_\alpha))$ with $(X_\beta,\xi_\beta)\in (\Gamma(TU_\alpha), \Gamma(T^*U_\alpha))$ on overlaps iff $X_\beta=X_\alpha$ and $\xi_\beta=\xi_\alpha+\iota_{X_\alpha}dA_{\alpha\beta}$. Consistency on triple overlaps $U_{\alpha\beta\gamma}$ follows from $dA_{\beta\gamma}-dA_{\alpha\gamma}+dA_{\alpha\beta}=(d\delta A)_{\alpha\beta\gamma}=(d^2\Lambda)_{\alpha\beta\gamma}=0$.  On $TM|_{U_\alpha}$ the splitting is given by 
\begin{align*}
s|_{U_\alpha}:X_\alpha\rightarrow X_\alpha+\iota_{X_\alpha}B_\alpha,
\end{align*}
and consistency on overlaps follows from $B_\beta-B_\alpha=dA_{\alpha\beta}$.

A closed $B$-field such that $B/2\pi$ has integral periods should be considered as a gauge transformation.  This means that the notion of equivalence of generalised structures should not be just the diffeomorphisms connected to the identity, but extended by $H^2(M,\ZZ)$. 

The importance of the $H$-twist appearing in the exact Courant algebroid \eqref{Dorf} is in identifying it as $H(X_1,X_2)=s^*(s(X_1),s(X_2))$ for some non-trivial bundle $E\rightarrow M$.

\begin{remark}
	In string theory applications the requirement that $[H/2\pi]\in H^3(M,\ZZ)$ arises naturally as the requirement ensuring a single valued path-integral.
	$[H/2\pi]\in H^3(M,\ZZ)$ is interpreted as the Neveu-Schwarz flux, with a local two-form potential, $B$, satisfying $H|_{U_\alpha}=dB_\alpha$. $B$-transformations in $H^2(M,\ZZ)$ are viewed as gauge transformations, and should be quotiented out when considering physically distinct states.  Generalised geometry can be seen as a way of encoding flux geometrically.
\end{remark}
\subsection{Generalised geometric structures}\label{gengeomstructures}~\\
Almost all differential geometry structures have a counterpart in generalised geometry, defined on the vector bundle $\TT M$.  Of most interest are generalised complex structures, generalised metric structures, and generalised K\"ahler structures, with \cite{Gua04} being the standard introductory reference. 

In addition to the exact Courant algebroid structure, the generalised tangent bundle admits a Clifford action of sections $(X,\xi)\in \Gamma(\TT M)$ on differential forms $\varphi\in\Omega^\bullet(M)$, given by
\begin{equation}
(X,\xi)\cdot\varphi=\iota_X\varphi+\xi\wedge\varphi,
\end{equation}
satisfying $(X,\xi)^2\cdot\varphi=\langle(X,\xi),(X,\xi)\rangle \varphi:=||(X,\xi)||^2\varphi$.  Forms $\varphi\in (\wedge^m T^*)^{\frac{1}{2}}\otimes\Omega^{\text{od/ev}}(M)$, where $m=\dim(M)$, describe spinors \cite{Gua04}.  Associated to each spinor is the annihilator bundle
\begin{align*}
L_\varphi:=\Ann(\varphi)=\{(X,\xi)\in \Gamma(\TT M): (X,\xi)\cdot\varphi=0 \}.
\end{align*}  
A \emph{pure spinor} is given by $\varphi\in \Omega^{\text{od/ev}}(M)\otimes \CC$ that is non-degenerate with respect to the Mukai pairing $(\varphi,\varphi)_M\neq 0$, where
\begin{equation}
(\varphi_1,\varphi_2)_M=(\alpha(\varphi_1)\wedge \varphi_2)_{m},
\end{equation}
where $\alpha$ is the Clifford anti-automorphism $\alpha(dx^1\otimes dx^2\otimes\cdots\otimes dx^k)=dx^k\otimes dx^{k-1}\otimes\cdots\otimes dx^1$, $m=\dim(M)$, and $(\cdot)_m$ denotes the projection onto $\Omega^m(M)$.  Given a pure spinor $\varphi$, and a function $f\in C^\infty(M)$ we have
\begin{align*}
(X,\xi)\cdot f\varphi=f(X,\xi)\cdot \varphi,\quad (f\varphi,f\varphi)_M=f^2(\varphi,\varphi)_M.
\end{align*}
If $f$ is nowhere zero, then $f\varphi$ describes the same maximal isotropic subspace as $\varphi$.  There is a local one-to-one correspondence between maximally isotropic subspaces of $\TT M$ and conformal classes of pure spinors.  

\begin{definition}
	A \emph{generalised almost complex structure} on $\TT M$ is given by $\cJ\in \End(\TT M)$, satisfying $\cJ^*=-\cJ$ and $\cJ^2=-\text{id}$. 
\end{definition}
A generalised almost complex structure can equivalently be described by a maximal isotropic complex subbundle $L\subset \TT M\otimes \CC$, satisfying $L\cap \bar{L}=\{ 0\}$, for 
\begin{align*}
L_\cJ=\{\XX\in \Gamma(\TT M\otimes\CC):\cJ(\XX)=i\XX \}.
\end{align*}
There is a local one-to-one correspondence between generalised almost complex structures and conformal classes of complex pure spinors, where a complex pure spinor satisfies the non-degeneracy condition $(\bar{\varphi},\varphi)_M\neq 0$.

A generalised almost complex structure, $\cJ$, is $H$-involutive if all sections $\XX$ of the $+i$-eigenbundle $L_J$ are involutive with respect to $\circ_H$: $\XX_1,\XX_2\in L_\cJ \Rightarrow \XX_1\circ_H \XX_2\in L_\cJ$.

\begin{definition}
	A \emph{generalised complex structure} is an $H$-involutive generalised almost complex structure. 
\end{definition}

A pure spinor, $\varphi$, is said to be \emph{$H$-involutive} if there exists some $v\in \Gamma(\TT M\otimes \CC)$ such that
\begin{equation}
d\varphi+H\wedge \varphi=v\cdot\varphi.
\end{equation} 
Given an $H$-involutive pure spinor $\varphi$ and a nowhere zero complex function $f\in C^\infty(M)$, we have
\begin{align*}
df\varphi+H\wedge f\varphi=f(d\varphi+H\wedge\varphi)+df\wedge \varphi=fv\cdot \varphi+df\wedge \varphi:=v'\cdot f\varphi,
\end{align*}
where $v'=v+f^{-1}df$.

There is a local one-to-one correspondence between Generalised complex structures and conformal classes of $H$-involutive pure spinors.

\begin{example}
	Given an (almost) symplectic structure $\omega\in\Omega^2(M)$, we can define a generalised (almost) complex structure with the spinor,	$\varphi_\omega=e^{i\omega}$.  $(\varphi_\omega,\bar{\varphi}_\omega)_M=\omega^{m/2}\neq 0$ as $\omega$ is non-degenerate.  The $+i$-eigenbundle is given by  
	\begin{align*}
	L_\omega=\{(X,\xi)\in \Gamma(\TT M\otimes \CC) :\xi=i\omega(X,\cdot)\}.
	\end{align*}
\end{example}
\begin{example}
	Given an (almost) complex structure $J\in\End(TM)$, satisfying, $J^*=-J$, $J^2=-\text{id}$, we can define a generalised (almost) complex structure with the spinor $\varphi_J=\Omega$,
	where $\Omega\in \Omega^{(m,0)}(M)$, is any generator of the $(m,0)$-forms for the complex structure $J$.  $(\rho_J,\bar{\rho}_J)_M=\Omega\wedge \bar{\Omega}$.  The $+i$-eigenbundle is given by  
	\begin{align*}
	L_\Omega=T^{(0,1)}M\oplus T^*{}^{(1,0)}M
	\end{align*}
\end{example}

The tangent bundle $TM$ has a structure group $\sfGL(m)$. A reduction of the structure group $\sfGL(m)$ to its maximal compact subgroup $\sfO(m)$, defines a choice of Riemannian metric.  The generalised tangent bundle $\TT M$, equipped with metric $\langle\cdot,\cdot\rangle$, has structure group $\sfO(m,m)$.   
\begin{definition}
	A \emph{Generalised metric} $\cG$ is a positive definite metric on $\TT M$, corresponding to a choice of reduction of the structure group from $\sfO(m,m)$ to $\sfO(m)\times \sfO(m)$.  
\end{definition}
The inner product $\langle\cdot,\cdot\rangle$ determines a splitting $\TT M=C_+\oplus C_-$, where $C_+$ is positive definite with respect to $\langle\cdot,\cdot\rangle$, and $C_-$ is negative definite. The generalised metric structure is defined by
\begin{align}\label{GenMetric}
\cG(\XX_1,\XX_2):=\langle\XX_1,\XX_2\rangle|_{C_+}-\langle\XX_1,\XX_2\rangle|_{C_-}.
\end{align}

Using the metric $\langle\cdot,\cdot\rangle$ to identify $\TT M$ with $\TT^*M$, a generalised metric can be identified with $\cG\in \End(\TT M)$ satisfying $\cG^*=\cG$, and $\cG^2=\text{id}$. It follows from \eqref{GenMetric} that $C_\pm$ correspond to the $\pm 1$-eigenbundles of $\cG$.

Given a Riemannian metric $\sfg$, a generalised metric $\cG$ can be defined by the identification 
\begin{align*}
C_\pm=\{(X,\xi)\in \TT M: \xi=\pm \sfg(X,\cdot) \}.
\end{align*}  
  
\begin{definition}
	A \emph{generalised almost K\"ahler structure} is a pair of almost generalised structures satisfying $\cJ_1\cJ_2=\cJ_2\cJ_1$, and $-\cJ_1\cJ_2=\cG$, for a generalised metric $\cG$. 
\end{definition}

The role of extended symmetry is important as it provides deformations of generalised metric and generalised complex structures.  This provides a way to generate examples, and gives a notion of equivalence which goes beyond diffeomorphisms.

\begin{example}[Twisted generalised metric]
	A generalised metric, defined by a Riemannian metric, $\sfg$, can be twisted by a 2-form $B$, to give another generalised metric.
	\begin{align*}
	C^B_\pm=\{(X,\xi) \in \TT M: \xi=\pm \sfg(X,\cdot)+B(X,\cdot) \}.
	\end{align*}
\end{example}
\begin{example}[Twisted generalised complex structure]\label{localgencomplex}
	Given a pure spinor $\varphi$ defining an (almost) complex structure, we can define $\varphi^B=e^{-B}\wedge\varphi$, for $B\in\Omega^2(M)$, satisfying
	\begin{align*}
	(\varphi^B,\bar{\varphi}^B)_M=(\varphi,\bar{\varphi})_M,\quad d_{H'}(e^{-B}\wedge\varphi)-v'\cdot(e^{-B}\wedge \varphi)=e^{-B}(d_H\varphi-v\cdot \varphi),
	\end{align*}
	where $H'=H+dB$, and $v'=e^{B}\cdot v=e^B(X,\xi)=(X,\xi+\iota_X B)$. 
\end{example}
A generalised $H$-involutive complex structure $\varphi$ can be deformed to a $(H+dB)$-involutive complex structure $e^{-B}\wedge\varphi$.

A generalised (almost) complex structure is of \emph{geometric type-$k$}, if $t_L(x):=\text{codim}_\CC(\rho(L_x))$.  Generically the type can change at each point $x\in M$.
\begin{example}
	Locally every generalised (almost) complex structure of type-$k$, can be associated (non-canonically) to a pure spinor $\varphi_J=\Omega\wedge e^{B+i\omega}$, where $\omega\in\Omega^2(M)$, $\Omega$ is a complex decomposable form of degree $k$, and $\omega^{m/2-k}\wedge \Omega\wedge\bar{\Omega}\neq 0$.
\end{example}
  
In Section \ref{genSasakigeom} generalised coK\"ahler structures will be defined in a way that mirrors the common definition in terms of K\"ahler structures.  This Section concludes with the definition of generalised Calabi-Yau structures, and hyperK\"ahler structures. 

\begin{definition}
	A \emph{generalised almost Calabi-Yau structure} consists of two pure spinors $(\varphi_1,\varphi_2)$, which define two generalised complex structures $(\cJ_1,\cJ_2)$ forming a generalised almost K\"ahler structure. In addition, the lengths of these sections are related by a constant 
	\begin{align*}
	(\varphi_1,\bar{\varphi}_1)_m=c(\varphi_2,\bar{\varphi}_2)_m,
	\end{align*}
	where $c\in\RR$ can be scaled to either $+1$ or $-1$ by rescaling $\varphi_1$. 
\end{definition}

A generalised Calabi-Yau structure is a generalised almost Calabi-Yau structure where $(\varphi_1,\varphi_2)$ are both $H$-involutive.

\begin{example}[Calabi-Yau]\label{CalabiYauEx}
	A Calabi-Yau manifold is a K\"ahler manifold of complex dimension $m$ with symplectic form $\omega$ and holomorphic volume form $\Omega$ satisfying $\omega^m=2^{-m}i^mm!\Omega\wedge\bar{\Omega}$.  This gives a generalised Calabi-Yau structure with $\varphi_1=e^{i\omega}$, and $\varphi_2=\Omega$, satisfying
	\begin{align*} (e^{i\omega},e^{-i\omega})=(-1)^{\frac{m(m-1)}{2}}(\Omega,\bar{\Omega}).
	\end{align*}
\end{example}

\begin{example}[hyperK\"ahler]
	Given a hyperK\"ahler structure $(M,g,I,J,K)$ a generalised K\"ahler structure can be constructed:
	\begin{align*}
	\varphi_1=e^{B+i\omega_1},\quad \varphi_2=e^{-B+i\omega_2},
	\end{align*}
	where $B=\omega_K,\ \omega_1=\omega_I-\omega_J,\ \omega_2=\omega_I+\omega_J$.
\end{example}

\section{Generalised contact geometry}\label{gencontgeom}
The $H$-twisted exact Courant algebroid, and the associated Dirac structures play a fundamental role in generalised complex geometry.  The corresponding objects in generalised contact geometry are the contact Courant algebroid, and contact Dirac structures.  

For sections $\XX=(X,f,g,\xi)\in (\Gamma(TM),C^\infty(M),C^\infty(M),\Gamma(T^*M))$, the \emph{contact Courant algebroid} is given by \cite{Bou05,Bou10}:
\begin{subequations}\label{RedCouranttot} 
	\begin{align}
	\XX_1\circ_{H_3,H_2,F}\XX_2=\Big(&[X_1,X_2],X_1(f_2)-X_2(f_1)-\iota_{X_1}\iota_{X_2}F,X_1(g_2)-X_2(g_1)-\iota_{X_1}\iota_{X_2}H_2,\nonumber\\
	&\cL_{X_1}\xi_2-\iota_{X_2}d\xi_1-\iota_{X_1}\iota_{X_2}H_3+g_2df_1+f_2dg_1\label{RedCourant}\\
	&+f_1\iota_{X_2}H_2-f_2\iota_{X_1}H_2+g_1\iota_{X_2}F-g_2\iota_{X_1}F\Big);\nonumber\\
	\langle \XX_1,\XX_2\rangle=&\frac{1}{2}(\iota_{X_1}\xi_2+\iota_{X_2}\xi_1+f_1g_2+g_1f_2);\label{innerprod}\\
	\rho(\XX)=&X;
	\end{align}
\end{subequations}
where the twists $(H_3,H_2,F)\in(\Omega^3(M),\Omega^2(M),\Omega^2(M))$ are globally defined forms required to satisfy the Maurer-Cartan identities:
\begin{align}\label{MaurerCartan} 
dH_3+H_2\wedge F=0,\quad dH_2=0,\ dF=0.
\end{align}
This is a twisted version of the contact Courant algebroid that has appeared previously in the generalised contact literature \cite{Igl01,Igl04,Gra13}.  The twists $(H_3,H_2,F)$ play an essential role in describing symmetries and deformations of generalised contact structures.

First consider the case that $H_3=H_2=F=0$.  There is an action of $B\in\Omega^2(M)$, $a,b\in\Omega^1(M)$ on $\XX=(X,f,g,\xi)\in\Gamma(E)$:
\begin{align}
e^{(B,b,a)}(X,f,g,\xi)=&\Big(X,f+2\langle X,a\rangle,g+2\langle X,b\rangle,\xi+\iota_X B-fb-ga-\langle X,a\rangle b-\langle X,b\rangle a\Big).\label{Symm}
\end{align}
The action satisfies
\begin{equation*}
\langle e^{(B,b,a)}\XX_1,e^{(B,b,a)}\XX_2\rangle=\langle\XX_1,\XX_2\rangle,\quad e^{(B,b,a)}\XX_1\circ_{(0,0,0)} e^{(B,b,a)}\XX_2=e^{(B,b,a)}(\XX_1\circ_{H'_3,H'_2,F'}\XX_2)\label{RedCourantsymm},
\end{equation*}
where 
\begin{equation*}
H'_3=dB+\frac{1}{2}(da\wedge b+a\wedge db),\quad H'_2=db, \quad F'=da.
\end{equation*}
When $H_3=H_2=F=0$ the  bracket \eqref{RedCourant} has the symmetry group
\begin{align*} \mathsf{Diff}(M)\ltimes (\Omega^2_{\text{cl}}(M),\Omega^1_{\text{cl}}(M),\Omega^1_{\text{cl}}(M)).
\end{align*}
The group action is generated by the algebra action
\begin{align*}
(B,b,a)\cdot(X,f,g,\xi)=(0,\iota_{X}a,\iota_{X}b,\iota_{X}B-fb-ga),
\end{align*}
with 
\begin{align*}
e^{(B,b,a)}\XX_1=\XX_1+(B,b,a)\cdot\XX_1+\frac{1}{2!}(B,b,a)^2\cdot\XX_1+\dots
\end{align*}
where the algebra composition is given by
\begin{align*}
(B_2,b_2,a_2)\cdot(B_1,b_1,a_1)=(B_1+B_2-\frac{1}{2}(b_1\wedge a_2+a_1\wedge b_2),b_1+b_2,a_1+a_2),
\end{align*}
and is non-abelian.  

For non-trivial $(H_3,H_2,F)$ the symmetries are described by a non-abelian gerbe.  The construction follows from Baraglia's general argument for twisting closed form Leibniz algebroids \cite{Bar11}.  The twists are constructed from
\begin{align}\label{CurvBba}
(H_3,H_2,F)=(dB_\alpha-\frac{1}{2}a_\alpha\wedge db_\alpha-\frac{1}{2}da_\alpha\wedge b_\alpha ,db_\alpha,da_\alpha),
\end{align}
where 
\begin{align*}
(B_\alpha,b_\alpha,a_\alpha)\in(\Omega^2(U_\alpha),\Omega^1(U_\alpha),\Omega^1(U_\alpha)),
\end{align*}
are required to satisfy \eqref{MaurerCartan} and satisfy cocycle conditions 
\begin{align}\label{CocycleBba}
(B_{\alpha\beta},b_{\alpha\beta},a_{\alpha\beta})\cdot(B_{\beta\gamma},b_{\beta\gamma},a_{\beta\gamma})\cdot(B_{\gamma\alpha},b_{\gamma\alpha},a_{\gamma\alpha})=0\quad\text{on }U_{\alpha\beta\gamma},
\end{align}
where 
\begin{align*}
(B_{\alpha\beta},b_{\alpha\beta},a_{\alpha\beta})=(B_{\alpha}-B_{\beta}+\frac{1}{2}b_\alpha\wedge a_\beta+\frac{1}{2}a_\alpha\wedge b_\beta,b_{\alpha}-b_{\beta},a_\alpha-a_{\beta}).
\end{align*}
This defines a twisted bundle with sections patched using  
\begin{align*}
(X_\alpha,f_\alpha,g_\alpha,\xi_\alpha)=e^{(B_{\alpha\beta},b_{\alpha\beta},a_{\alpha\beta})}(X_\beta,f_\beta,g_\beta,\xi_\beta)\quad \text{on }U_{\alpha\beta},
\end{align*}
to define a global section $(X,f,g,\xi)\in\Gamma(E)$.  The twists do not define $(B,b,a)$ uniquely, and it is possible to make a different choice $(B',b',a')$ as long as $(B_{\alpha\beta},b_{\alpha\beta},a_{\alpha\beta})=(B'_{\alpha\beta},b'_{\alpha\beta},a'_{\alpha\beta})$.  This gives the relation
\begin{align}
(B'_\alpha,b'_\alpha,a'_\alpha)=(B_\alpha+B''-\frac{1}{2}b_\alpha\wedge a''-\frac{1}{2}a_\alpha\wedge b'',b_\alpha+b'',a_\alpha+a''),
\end{align} 
where $(B'',b'',a'')$ are globally defined forms satisfying
\begin{align}\label{GaugeBba}
(dB''+H_3-\frac{1}{2}da''\wedge b''-\frac{1}{2}a''\wedge db'',db''+H_2,da''+F_2)=0.
\end{align}
\begin{definition}
	For a fixed set of twists $(H_3,H_2,F)\in (\Omega^3(M),\Omega^2(M),\Omega^2(M))$ satisfying \eqref{MaurerCartan}, a \emph{$(B,b,a)$-transformation} corresponds to a choice of $(B_\alpha,b_\alpha,a_\alpha)\in(\Omega^2(U_\alpha),\Omega^1(U_\alpha),\Omega^1(U_\alpha))$ which generate the twists $(H_3,H_2,F)$ i.e., satisfying conditions \eqref{CurvBba} and \eqref{CocycleBba}.  The choice of $(B,b,a)$-transformation is not unique.  Two transformations $(B,b,a)$ and $(B',b',a')$ will produce the same twists $(H_3,H_2,F)$ if they are related by a set $(B'',b'',a'')\in (\Omega^2(M),\Omega^1(M),\Omega^1(M))$ satisfying \eqref{GaugeBba}. A transformation $(B'',b'',a'')$ satisfying \eqref{GaugeBba} defines  a \emph{$(B'',b'',a'')$-gauge transformation}. 
\end{definition}

The description above shows that the twisted contact Courant algebroid can be seen as a bracket on via a clutching function construction.  The insight here is to interpret the contact Courant algebroid bracket as an $\sfS^1$-reduction of the standard twisted Courant algebroid.  The identification is made as follows: Consider a Courant algebroid associated to the vector bundle $E$ given as 
\begin{align*}
\xymatrix{0\ar[r] & T^*P \ar[r]^-{\rho^*_B} & E \ar[r]^-{\rho_B} \ar@/^1pc/[l]^-{(s_B)^*} & TP \ar@/^1pc/[l]^-{s_B} \ar[r] & 0,}
\end{align*}
with the standard $H$-twisted Courant algebroid structure \eqref{StandardCourant}, identifying
\begin{align*} 
H_3(\cX_1,\cX_2)=s^*_B(s_B(\cX_1),s_B(\cX_2)),\quad \cX_1,\cX_2\in \Gamma(TP).
\end{align*}
If $P(M,\pi,\sfU(1))$ is a principal $\sfU(1)$-bundle, then there are Atiyah algebroids associated to $TP$ and $(TP)^*\cong T^*P$:
\begin{align*}
&\xymatrix{0\ar[r] & P\times \RR \ar[r]^-{r_a} & TP/\sfU(1) \ar[r]^-{\pi_*} \ar@/^1pc/[l]^-{t_a} & TM \ar@/^1pc/[l]^{s_a} \ar[r] & 0,}\\
&\\
&\xymatrix{0\ar[r] & P^*\times \RR \ar[r]^-{(t_b)^*} & T^*P/\sfU(1) \ar[r]^{(s_b)} \ar@/^1pc/[l]^{(r_b)^*} & T^*M \ar@/^1pc/[l]^{(\pi_b)^*} \ar[r] & 0.}
\end{align*}
identifying
\begin{align*}
H_2(X_1,X_2)=t^*_b(s_b(X_1),s_b(X_2)),\quad F(X_1,X_2)=t^*_a(s_a(X_1),s_a(X_2)), 
\end{align*}
for $X_1,X_2\in\Gamma(TM)$.  The induced map on sections is
\begin{align*}
\xymatrix{0\ar[r]& C^\infty(P,\RR)^{\sfU(1)}\simeq C^\infty(M) \ar[r]^-r & \fX_\sfG(P)\simeq \fX(M)\oplus C^\infty(M)  \ar[r]^-{\pi_*}  & \fX(M)  \ar[r] & 0},
\end{align*}	
allowing the identification
\begin{align*}
\Gamma(E)\cong\Gamma(TP/\sfU(1))\oplus\Gamma((TP/\sfU(1))^*)\cong \Gamma(TM)\oplus C^\infty(M)\oplus C^\infty(M)\oplus \Gamma(T^*M).
\end{align*}
In this way the twisted contact Courant algebroid can be identified with 
\begin{align*}
\xymatrix{0\ar[r]& (TP/\sfU(1))^* \ar[r] & E  \ar[r]  & TP/\sfU(1) \ar[r] & 0}.
\end{align*}
The twisted contact Courant algebroid is constructed out of $(H_3,H_2,F)$ and makes no reference to $(B_\alpha,b_\alpha,a_\alpha)$.  Another choice $(B'_\alpha,b'_\alpha,a'_\alpha)$ giving the same $(H_3,H_2,F)$ will give an isomorphic twisted contact Courant algebroid.  Any two choices  $(B_\alpha,b_\alpha,a_\alpha)$ and  $(B'_\alpha,b'_\alpha,a'_\alpha)$ give the same twists iff they are related by a $(B'',b'',a'')$-gauge transformation.  The notion of equivalence of generalised contact geometry should be extended to include to the full set of symmetries---diffeomorphisms and $(B'',b'',a'')$-gauge transformations.  Geometrically the gauge transformations can be interpreted as a change in splitting of the bundle $E\cong TM\oplus \RR\oplus \RR \oplus T^*M$.  Geometry on the bundle $E$ should not be dependent on the choice of splitting.

\begin{remark}
	Sekiya studied generalised contact structures associated to the trivial line bundle $L=M\times \RR$, and noted $\kappa$-symmetries \cite{Sek12}.  This corresponds to $(b,a)$-transformations for globally defined forms not subject to periodicity constraints.  This clarifies the geometric origin of Sekiya's $\kappa$-symmetries.  The $(b,a)$-transformations correspond to a choice of connection for the circle/line-bundle $P\rightarrow M$.  The non-abelian composition law for $B$, with $(b,a)$, reflects the fact that there is choice in which order one splits the sequences.  
\end{remark}

\section{Generalised contact structures}\label{gencontactstruc}
This section describes the mixed pair description of generalised contact structures, the odd-dimensional analogue of the pure spinor description in generalised complex geometry.  Aldi and Grandini gave a proposal for mixed pairs which were compatible with $B$-transformations, but not the full set of $(B,b,a)$-transformations, so do not incorporate non-coorientable structures.

There is a Clifford action of sections $(X,f,g,\xi)\in \Gamma(TM,\RR,\RR,T^*M)$ on pairs of differential forms $(\varphi,\psi)\in\Omega^\bullet(M)$, given by
\begin{equation}
(X,f,g,\xi)\cdot (\varphi,\psi)=((X,\xi)\cdot\varphi+f\varphi,g\varphi-(X,\xi)\cdot \psi),
\end{equation}
where $(X,\xi)\cdot\varphi=\iota_X\varphi+\xi\wedge \varphi$, is the Clifford product on $\TT M$.  This product satisfies
\begin{align*}
(X,f,g,\xi)^2\cdot(\varphi,\psi)=(\iota_X\xi+fg)(\varphi,\psi)=\langle(X,f,g,\xi),(X,f,g,\xi)\rangle(\varphi,\psi):=||(X,f,g,\xi)||^2(\varphi,\psi).
\end{align*}

It is interesting to consider the annihilator bundles of a pair $(\varphi,\psi)$:
\begin{align*}
\Ann((\varphi,\psi)):=\{(X,f,g,\xi)\in E\otimes \CC: (X,f,g,\xi)\cdot(\varphi,\psi)=0 \}.
\end{align*} 
When $f=g=0$, $(X,0,0,\xi)\cdot(\varphi,\psi)=0$ implies that $(X,\xi)\cdot\varphi=0$ and $(X,\xi)\cdot \psi=0$, the same annihilator condition as Section \ref{Gengeom}.  For some pairs $(\varphi,\psi)$, there may be solutions for non-zero $f$ or $g$.  In this case
\begin{align*}
f\psi=-(X,\xi)\cdot\varphi,\quad g\varphi=(X,\xi)\cdot\psi,
\end{align*}
indicating that there exist sections $v\in \TT M$ which relate $\varphi$ and $\psi$.  Pure spinors play an important role in describing Dirac structures in $\TT M\otimes \CC$, mixed pairs describe the odd-dimensional analogue of Dirac structures.  

\begin{definition}
	A \emph{contact Dirac structure}, on an odd-dimensional manifold $M$ ($m=\dim(M)$), is a splitting of $E\otimes \CC$ into isotropic subspaces given by the triple $(L,v_1,v_2)$, where $\dim_\RR(L)=m-1$, and $v_1,v_2\in \Gamma(\TT M)$ satisfying
	\begin{align*}
	E\otimes \CC=L\oplus \bar{L} \oplus \CC v_1\oplus \CC v_2,\quad \quad 
	L\cap \bar{L}=0. 
	\end{align*} 
\end{definition}

The pairing $(\cdot\ ,\cdot)_M$ for two pairs of differential forms  $(\varphi_i,\psi_i)$ ($i=1,2$) is given by
\begin{equation}
((\varphi_1,\psi_1),(\varphi_2,\psi_2))_M:=(-1)^{|\varphi_1|}(\alpha(\varphi_1)\wedge\psi_2)_{m-1}+(-1)^{|\psi_1|}(\alpha(\psi_1)\wedge\varphi_2)_{m-1},
\end{equation}
where $\alpha(dx^1\otimes dx^2\otimes\cdots\otimes dx^k)=dx^k\otimes dx^{k-1}\otimes\cdots\otimes dx^1$,
$m=\dim(M)$, $|\varphi|=k$ for $\varphi\in\Omega^k(M)$, and $(\cdot\ )_{m-1}$ is the projection to $\Omega^{m-1}(M)$.

\begin{definition}
	A \emph{Dirac pair} $(\varphi,\psi)$ consists of two differential forms $\varphi,\psi\in\Omega^{\text{ev/od}}(M)\otimes\CC$, satisfying
	\begin{align*}
	(\alpha(\varphi)\wedge\bar{\varphi})_{m-1}\neq 0,\quad (\alpha(\psi)\wedge\bar{\psi})_{m-1}\neq &0,\quad ((\varphi,\psi),(\bar{\varphi},\bar{\psi}))_{M}\neq 0\\
	v_1\cdot\varphi=\psi,\quad& v_2\cdot\psi=\varphi,
	\end{align*} 
	for some $v_1,v_2\in \Gamma(\TT M)$.
\end{definition}
The second condition requires that either $\varphi\in\Omega^{\text{ev}}(M)\otimes\CC$ and $\psi\in\Omega^{\text{odd}}(M)\otimes\CC$, or $\varphi\in\Omega^{\text{odd}}(M)\otimes\CC$ and $\psi\in\Omega^{\text{ev}}(M)\otimes\CC$.

Given two nowhere zero functions $f_1,f_2\in C^\infty(M)$ and a Dirac pair $(\varphi,\psi)$, the pair $(f_1\varphi,f_2\psi)$ satisfy the non-degeneracy condition, and  
\begin{align*}
v_1\cdot f_1\varphi-f_2\psi=f_2(v'_1\cdot \varphi-\psi)=0,\quad& v_2\cdot f_2\psi-f_1\varphi=f_1(v_2\cdot \psi-\varphi)=0,
\end{align*} 
for some $v'_1=f_2/f_1v_1$ and $v_2=f_1/f_2 v_2$.  Thus $(\varphi,\psi)$ and $(f_1\varphi,f_2\psi)$ describe the same contact Diract structure $(L,v_1,v_2)$.

To motivate the definition of generalised contact structures, it is helpful to briefly consider the relationship between contact structures and symplectic structures.  A contact stucture is a maximally non-integrable codimension-1 hyperplane distribution $D\subset TM$.  This can be described by the line bundle $TM/D$.  Letting $\eta$ be an $TM/D$-valued 1-form, the distribution is given by $D=\ker(\eta)$.  The non-integrability condition can be given as $\eta\wedge (d\eta)^{m}\neq 0$, where $\dim(M)=2m+1$.  Letting $\omega_D=d\eta$, there is a transverse symplectic structure on $D$.  In addition, there is another symplectic structure associated with the manifold $N:=M\times \RR_{t}$; take $\alpha=dt+\eta$, and set $\omega_t=d(e^t\alpha)$.  When $TM/D$ is a non-trivial line bundle there is no globally defined contact form $\eta$.  It is possible to consider the same construction with $\sfS^1\hookrightarrow P'\rightarrow M$.  In this case there is an Atiyah algebroid structure and the contact structure can be associated with an $\sfS^1$-invariant reduction.  In the non-trivial case $\eta$ is no longer globally defined but is part of a $(B,b,a)$-transformation with $B=0,a=b=\eta$ with $H_3=0,H_2=F=d\eta$.  
 
The ability to construct two symplectic structures from a contact structure is the guiding principle of generalised contact structures.  A generalised contact structure should be able to be viewed as a (possibly non-trivial) $\sfS^1$-reduction of a generalised complex structure (see Examples \ref{cosymp} and \ref{contact}).  In addition, the definition should be compatible with $(B,b,a)$-transformations. 

\begin{remark}
Contact structures are usually associated to symplectic structures via line bundles $TM/D$, and $N=M\times \RR$.  Throughout this note $\sfS^1$-bundles will be considered primarily.  The motivation for this is the fact that $[H_2/2\pi],[F/2\pi]\in H^2(M;\ZZ)$ have a nice interpretation in terms of gerbes, as outlined in Section \ref{gencontgeom}.  The corresponding Courant algebroid description applicable to non-trivial line bundles was given by Vitagliano and Wade \cite{Vit15b}, and is briefly described in Section \ref{Linebundle}.  
\end{remark}

Generalised contact structures have been studied in a number of papers \cite{Igl04,Poo09,Ald13}.  However, the $(b,a)$-twists (which allow the description of non-coorientable structures when $H_2,F\neq 0$) have received little attention.  
\begin{definition}[\cite{Sek12}]
	A \emph{Sekiya quadruple} on an odd-dimensional manifold $M$ is given by the quadruple $(\Phi,e_1,e_2,\lambda)\in(\mathsf{End}(\TT M),\Gamma(\TT M),\Gamma(\TT M),C^\infty(M))$, satisfying the following conditions:
	\begin{subequations}
	\begin{align}
		\langle e_1,e_1 \rangle=&0=\langle e_2,e_2\rangle,\ \langle e_1,e_2\rangle=\frac{1}{2};\\
		\Phi^*=&-\Phi;\\
		\Phi(e_1)=&\lambda e_1,\ \Phi(e_2)=-\lambda e_2;\\
		\Phi^2(v)=&-v+2(1+\lambda^2)(\langle v,e_2\rangle e_1+\langle v,e_1\rangle e_2),\text{ for }v\in \Gamma(\TT M). 
	\end{align}
	\end{subequations}
\end{definition}
Generalised contact structures coming from Sekiya quadruples with $\lambda=0$ have been well studied, and are often referred to as Poon-Wade triples \cite{Poo09}.  The importance of considering $\lambda\neq 0$ is the inclusion of the $(b,a)$-symmetries, which should be considered on an equal footing to $B$-transformations, and a fundamental part of the theory.

\begin{definition}
	Let $M$ be an odd-dimensional manifold of dimension $m$.  A \emph{generalised almost contact structure} is a quadruple $(L,e_1,e_2,\lambda)$, where $L\subset \TT M\otimes \CC$ is a maximal isotropic subspace $\dim_\RR(L)=m-1$, and $e_1,e_2\in \Gamma(\TT M)$, satisfy
	\begin{align*}
	\langle e_1,e_1\rangle=0,\quad \langle e_2,e_2\rangle=0,\quad \langle e_1,e_2\rangle=1/2.
	\end{align*}
\end{definition}
A Sekiya quadruple can be associated to a generalised almost contact structure: Let $L$ represent the $+i$-eigenbundle of $\Phi$, and $e_1,e_2$ specify the $\pm \lambda$ eigenbundles respectively.

It is clear that the pairs $(e_1,e_2,\lambda)$ and $(e_2,e_1,-\lambda)$ describe the same generalised almost contact structure.  In particular if $\lambda=0$, $\dim(\ker(\Phi))=2$, and there is a $\sfO(1,1)$ freedom in the choice of $e_1,e_2$. 

A generalised almost contact structure on $M$ can be constructed from an $\sfS^1$-invariant generalised almost contact structure on a principal circle bundle $P(M,\pi,\sfU(1))$.   Consider a principal bundle $P(M,\pi,\sfU(1))$ over an odd-dimensional manifold $M$.  Let $\cU=\{U_\alpha \}$ denote a good cover of $M$, and $\pi^{-1}(U_\alpha)=U_\alpha\times\sfS^1$ a cover for $P$.  Take local coordinates $(x,t_\alpha)$, $x\in U_\alpha\cap U_\beta $,  $t_\alpha\in \sfS^1$. We have two set of coordinates on $\pi^{-1}(U_\alpha\cap U_\beta)$ $(x,t_\alpha)$ and $(x,t_\beta)$.  The coordinates are related by $t_\alpha=g_{\alpha\beta}t_\beta$, where $g_{\alpha\beta}\in \sfU(1)$ are transition functions.  Choose an $\sfS^1$-invariant connection $\cA$, given locally by $\cA_\alpha=dt_\alpha+A(x)$, where $A\in\Omega^1(M)$, and on $x\in U_{\alpha\beta}$ $\cA_\alpha=\cA_\beta-id\log g_{\alpha\beta}$.  Assume that there is an $\sfS^1$-invariant generalised almost complex structure $\cJ_{\text{inv}}\in\End(\TT P)$.  A choice of connection induces a decomposition of $S^1$-invariant sections $\Gamma(\TT P)=\Gamma(\TT M)\oplus C^\infty(M)\oplus C^\infty(M)$, with a section $v+\xi+f\partial_t+g\cA$, for $v\in \Gamma(TM)$, $f,g\in C^\infty(M)$, and $\xi\in\Gamma(T^*M)$.  This gives the decomposition:
\begin{equation}\label{genSekiya}
\cJ_{\text{inv}}=\left(
\begin{matrix}
\Phi & \mu e_1& \mu e_2\\
-2\mu\langle e_2,\cdot\rangle & -\lambda & 0\\ 
-2\mu\langle e_1,\cdot\rangle & 0 & \lambda
\end{matrix}
\right),
\end{equation}
where $\Phi\in\mathsf{End}(\TT M)$, $\mu=\sqrt{1+\lambda^2}$, $\lambda\in C^\infty(M)$, $e_1,e_2\in\TT M$.  The properties of a Sekiya quadruple $(\Phi,e_1,e_2,\lambda)$ follow from $\cJ^2_{\text{inv}}=-\text{id}$ and $\cJ^*_{\text{inv}}=-\cJ_{\text{inv}}$.  
The clutching construction for global sections $(X,f,g,\xi)$ involve transition functions $g_{\alpha\beta}\in \sfU(1)$.  The global sections can be viewed as a $(B,b,a)$-transformation with $B=b=0$, $a=\cA$, and twists $H_3=H_2=0$, and $F=d\cA$.  The choice of transition functions $g_{\alpha\beta}$ are not unique: the choice $g_{\alpha\beta}$ can be replaced with $g'_{\alpha\beta}=h_{\alpha\gamma}g_{\gamma\delta}h^{-1}_{\delta\beta}$, for $h_{\alpha\beta}\in\sfU(1)$, describing the same bundle.   Replacing $g_{\alpha\beta}$ with $g'_{\alpha\beta}$ corresponds to q $(0,0,a)$-gauge transformation, describing the decomposition with respect a connection $\cA'=\cA+a$, with $da=0$.    

\begin{definition}[\cite{Ald13}]
	A \emph{mixed pair} $(\varphi,\psi,e_1,e_2)$ consists of two polyforms $\varphi,\psi\in\Omega^{\text{ev/od}}(M)\otimes\CC$, and s choice of two sections $e_1,e_2\in\Gamma(\TT M)$ satisfying
	\begin{subequations}\label{mixedpairdef}
	\begin{align}
	(\varphi,\bar{\varphi})_{m-1}\neq 0,\quad (\psi,\bar{\psi})_{m-1}\neq &0,\quad ((\varphi,\psi),(\bar{\varphi},\bar{\psi}))_{m-1}\neq 0\label{mixpairing}\\
	e_1\cdot\psi=0,\quad \mu e_1\cdot\varphi=(1+i\lambda)\psi,\quad& e_2\cdot\varphi=0,\quad \mu e_2\cdot\psi=(1-i\lambda)\varphi,\label{mixcompat}
	\end{align} 	
	\end{subequations}
	where $\mu=\sqrt{1+\lambda^2}$, $\lambda\in C^\infty(M)$. 
\end{definition}

\begin{remark}
	The definition of mixed pair given here differs slightly from that given in \cite{Ald13}, which is valid for $\lambda=0$ only.  
\end{remark}

Given a nowhere zero function $f\in C^\infty(M)$, the pair $(f\varphi,f\psi)$ satisfies the equations \eqref{mixedpairdef} for fixed $(e_1,e_2,\lambda)$. 

A generalised almost contact structure can be described using a mixed pair. Given an almost contact structure $(L,e_1,e_2,\lambda)$, the identifications $\text{Ann}(\psi)=L\oplus \CC e_1$, and $\text{Ann}(\varphi)=L\oplus \CC e_2$, allow $L$ to be recovered as the intersection of the annihilator bundles of $(\varphi,\psi)$.

There is a local correspondence between generalised almost contact structures $(L,e_1,e_2,\lambda)$ and a conformal class of mixed pairs $(\varphi,\psi)$.

The definition of mixed pairs is motivated by the decomposition of a pure spinor, $\rho_\cJ$ (associated to an $\sfS^1$-invariant generalised complex structure $\cJ_{\text{inv}}$ on $M\times \sfS^1$ see \cite{Ald13}), into a mixed pair $(\varphi,\psi)$ associated to a Sekiya quadruple on $M$: $\rho_{\cJ}\rightarrow \varphi+idt\wedge \psi$. The pure spinor condition $(\varphi_\cJ,\bar{\varphi}_\cJ)_{M\times \sfS^1}\neq 0$ gives \eqref{mixpairing}.  Note that $\cJ|_{\sfS^1}(v_j)=iv_j$ ($j=1,2$) for $v_1=(\mu e_1,i-\lambda,0)$, $v_2=(\mu e_2,0,i+\lambda)$.  This implies that $v_j\cdot\varphi|_{\sfS^1}=0$, and gives \eqref{mixcompat}.

\begin{definition}
	A mixed pair $(\varphi,\psi)$ is \emph{$(H_3,H_2,F)$-involutive} if there exists a $V=(X,f,g,\xi)\in \Gamma(E)$ such that
	\begin{equation}
	d_{H_3,H_2,F}(\varphi,\psi)=V\cdot(\varphi,\psi),
	\end{equation}
	where 
	\begin{align*}
	d_{H_3,H_2,F}(\varphi,\psi):=&(d\varphi+H_3\wedge \varphi+F\wedge \psi,H_2\wedge\varphi-d\psi-H_3\wedge \psi),\\
	V\cdot(\varphi,\psi)=&(X,f,g,\xi)\cdot(\varphi,\psi):=(\iota_X\varphi+\xi\wedge \varphi+f\psi,g\varphi-\iota_X\psi-\xi\wedge\psi),
	\end{align*}
	and $(H_3,H_2,F)$ satisfy the Maurer-Cartan equations \eqref{MaurerCartan}.
\end{definition}

Given a non-zero function $h\in C^\infty(M)$, and mixed pair $(\varphi,\psi,e_1,e_2)$ satisfying $d_{H_3,H_2,F}(\varphi,\psi)=V\cdot (\varphi,\psi)$, the choice $(h\varphi,h\psi,e_1,e_2)$ satisfies 
\begin{align*}
d_{H_3,H_2,F}(h\varphi,h\psi)=V'\cdot (h\varphi,h\psi),\quad V'=(X,f,g,\xi-h^{-1}dh).
\end{align*}
Thus the $(H_3,H_2,F)$-involutive property is not dependent on the choice of $(\varphi,\psi)$ chosen to represent the almost contact structure $(L,e_1,e_2,\lambda)$.

\begin{definition}
	A \emph{$(H_2,H_2,F)$-generalised contact structure} $(L,e_1,e_2,\lambda)$ is a generalised almost contact structure $(L,e_1,e_2,\lambda)$ where all associated mixed pairs  $(\varphi,\psi,e_1,e_2)$ are $(H_3,H_2,F)$-involutive.  
\end{definition}

Let us briefly recall the Clifford product on $U\subset \wedge^\bullet T^*M\otimes \CC$ on the generalised tangent bundle $\TT M$:
\begin{align*}
(X,\xi)\cdot\rho=\iota_X\rho+\xi\wedge \rho,\quad (X,\xi)\cdot:\wedge^{\text{ev/odd}}T^*M\otimes \CC\rightarrow \wedge^{\text{odd/ev}}T^*M\otimes \CC,
\end{align*}
for $(X,\xi)\in\Gamma(TM)\oplus \Gamma(T^*M)$, and $\rho=U\subset\wedge^\bullet T^*M\otimes \CC$.
By Clifford multiplication by $U$, we obtain filtrations of the even and odd exterior forms (here
$2n$ is the real dimension of the manifold):
\begin{align*}
U =& U_0 < U_2 < \cdots < U_{2n} = \wedge^{\text{ev/odd}}T^*\otimes \CC,\\
L^* \cdot U =& U_1 < U_3 < \cdots < U_{2n-1} = \wedge^{\text{odd/ev}} T^*\otimes \CC,
\end{align*}
where ev/odd is chosen according to the parity of $U$ itself, and $U_k$ is defined as $CL^k\cdot U$, where $CL^k$ is spanned by products of not more than $k$ elements of $\TT M$ \cite{Gua04}. Note that,
using the inner product, we have the canonical isomorphism $L^* = ((T\oplus T^*) \otimes \CC)/L$, and so $U_1$ is
isomorphic to $L^*\otimes U_0$.  Theorem 3.38 of \cite{Gua04} shows that an almost Dirac structure defined by $\Ann(\rho)$ is Courant involutive iff $d(C^\infty(U_0))\subset C^\infty(U_1)$, which is equivalent to $d\rho=\iota_X\rho+\xi\wedge \rho$ for some $(X,\xi)\in(\Gamma(TM),\Gamma(T^*M))$.

A similar statement holds in the almost contact case.  Consider a mixed pair $(\varphi,\psi)$.  The definition requires that $(\varphi,\psi)\in(\wedge^{\text{ev/odd}}T^*M\otimes \CC,\wedge^{\text{odd/ev}}T^*M\otimes \CC)$. 
\begin{align*}
(X,f,g,\xi)\cdot(\varphi,\psi)&=(\iota_X\varphi+\xi\wedge \varphi+f\psi,g\varphi-\iota_X\psi-\xi\wedge\psi),\\
(X,f,g,\xi)\cdot:(\wedge^{\text{ev/odd}}T^*M\otimes\CC,\wedge^{\text{odd/ev}}&T^*M\otimes \CC)\rightarrow (\wedge^{\text{odd/ev}}T^*M\otimes \CC,\wedge^{\text{ev/odd}}T^*M\otimes \CC).
\end{align*} 
This gives a filtration on pairs $(\varphi,\psi)$:
\begin{align*}
V =& V_0 < V_2 < \cdots < V_{m+1} = (\wedge^{\text{ev/odd}}T^*\otimes \CC,\wedge^{\text{odd/ev}}T^*\otimes \CC),\\
L^* \cdot V =& V_1 < V_3 < \cdots < V_{m} = (\wedge^{\text{odd/ev}} T^*\otimes \CC,\wedge^{\text{ev/odd}}T^*\otimes \CC),
\end{align*}
where $\dim(M)=m$ is odd-dimensional and $\varphi,\psi$ are being viewed as pure spinors on a local trivialisation of $M\times \sfS^1$.
\begin{theorem}
	The annihilator bundle $\Ann(\varphi,\psi)$ of a $(H_3,H_2,F)$-involutive pair $(\varphi,\psi)$ is involutive under the $(H_3,H_2,F)$-contact Courant algebroid. 
\end{theorem}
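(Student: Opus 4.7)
The plan is to model the proof on the derived-bracket argument used in Gualtieri's proof of Theorem 3.38 for pure spinors, suitably extended to the mixed pair setting introduced above. The key intermediate step is to establish a derived-bracket identity of the form
\begin{equation*}
(V_1 \circ_{H_3,H_2,F} V_2) \cdot (\varphi,\psi) = \bigl[\{d_{H_3,H_2,F},\, V_1 \cdot\},\, V_2 \cdot\bigr](\varphi,\psi),
\end{equation*}
valid for all pairs $(\varphi,\psi)$ and all $V_i = (X_i,f_i,g_i,\xi_i) \in \Gamma(E)$, where $\{\cdot,\cdot\}$ is the graded anticommutator of two odd operators and the outer bracket is the commutator of an even operator with an odd one. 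This is a direct (if bookkeeping-heavy) computation from the formulas \eqref{RedCourant} and \eqref{innerprod} together with the Clifford action defined earlier in this section; the Maurer--Cartan equations \eqref{MaurerCartan} are exactly what makes the cross-terms on the right assemble into the coupling pieces $g_2\, df_1 + f_2\, dg_1$, $f_1 \iota_{X_2}H_2 - f_2 \iota_{X_1}H_2$, and $g_1 \iota_{X_2}F - g_2 \iota_{X_1}F$ appearing in \eqref{RedCourant}.

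Granting the identity, the involutivity conclusion is a short Clifford calculation in the spirit of the filtration argument sketched just before the theorem. Suppose $V_1, V_2 \in \Ann(\varphi,\psi)$, so that $V_i \cdot (\varphi,\psi) = 0$. Expanding the double commutator applied to $(\varphi,\psi)$ and discarding every term in which a $V_i$ sits immediately adjacent to $(\varphi,\psi)$, only $-V_2 \cdot V_1 \cdot d_{H_3,H_2,F}(\varphi,\psi)$ survives. Invoking $(H_3,H_2,F)$-involutivity to replace $d_{H_3,H_2,F}(\varphi,\psi) = V \cdot (\varphi,\psi)$, and then using the Clifford relation $V_i V + V V_i = 2\langle V_i, V\rangle$ twice, the surviving expression reduces to a combination of terms of the form $\langle V_i, V\rangle V_j \cdot (\varphi,\psi)$ and $V_2 \cdot V \cdot V_1 \cdot (\varphi,\psi)$, all of which vanish by annihilation. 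Therefore $(V_1 \circ_{H_3,H_2,F} V_2) \cdot (\varphi,\psi) = 0$, giving the desired involutivity of $\Ann(\varphi,\psi)$.

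The principal obstacle is verifying the derived-bracket identity itself, since the cross-coupling between $\varphi$ and $\psi$ through the $f\psi$ and $g\varphi$ pieces of the Clifford action, together with the three independent twists, generates a large number of terms to track. A conceptually cleaner alternative is to exploit the $\sfS^1$-reduction picture of Section \ref{gencontgeom}: locally on $\pi^{-1}(U_\alpha) \cong U_\alpha \times \sfS^1$, a mixed pair lifts to the pure spinor $\rho_\alpha = \varphi + i\, dt_\alpha \wedge \psi$; the three twists $(H_3,H_2,F)$ assemble, via the connection $\cA$ of Section \ref{gencontgeom}, into a single closed three-form on $P$; and sections of $E$ lift to $\sfS^1$-invariant sections of the standard Courant algebroid on $P$. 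In this picture the statement becomes a direct application of Gualtieri's Theorem 3.38 to $\rho_\alpha$, and compatibility across overlaps is guaranteed by the $(B,b,a)$-transformation formulas \eqref{Symm} together with the cocycle condition \eqref{CocycleBba}.
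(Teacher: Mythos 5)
Your proposal follows essentially the same route as the paper: the paper's proof likewise rests on establishing the identity $(\XX_1\circ_{H_3,H_2,F}\XX_2)\cdot(\varphi,\psi)=-\XX_2\cdot\XX_1\cdot d_{H_3,H_2,F}(\varphi,\psi)$ for sections of $\Ann(\varphi,\psi)$ (verified by a direct computation in the untwisted case, with the twisted case stated to follow similarly) and then concluding involutivity from $d_{H_3,H_2,F}(\varphi,\psi)=V\cdot(\varphi,\psi)$. Your packaging of that identity as a derived-bracket formula valid for all sections, and your explicit Clifford-relation finish in place of the paper's filtration argument, are minor variations on the same computation.
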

\begin{proof}
	Let $(L,e_1,e_2,\lambda)$ be the generalised almost contact structure, and let $(\varphi,\psi)$ be a trivialization of representative of $(L,e_1,e_2,\lambda)$ over some open set.
	
	We show below that
	\begin{align*}
	\XX_1\circ_{H_3,H_2,F}\XX_2\cdot(\varphi,\psi) = -\XX_2 \cdot \XX_1 \cdot d_{H_3,H_2,F}(\varphi,\psi),
	\end{align*}
	for any sections $\XX_1,\XX_2\in \Ann(\varphi,\psi)$. Hence $\Ann(\varphi,\psi)$ is involutive if and only if $\XX_1 \cdot \XX_2 \cdot d_{H_3,H_2,F}(\varphi,\psi) = 0,\  \forall \XX_1,\XX_2 \in \Ann(\varphi,\psi)$, which is true if and only if $d_{H_3,H_2,F}(\varphi,\psi)$ is in $C^\infty(V_1)$, as elements of $V_k$ are precisely those which are annihilated
	by $k + 1$ elements in $\Ann(\varphi,\psi)$.
	
	Consider a section $(X,f,g,\xi)=\XX\in \Ann(\varphi,\psi)$, which satisfies
	\begin{align*}
	(X,f,g,\xi)\cdot(\varphi,\psi)=(\iota_X\varphi+\xi\wedge \varphi+f\psi,g\varphi-\iota_X\psi-\xi\wedge\psi)=0.
	\end{align*}
	Rearranging we have
	$\iota_X\varphi=-\xi\wedge\varphi-f\psi,\quad \iota_X\psi=g\varphi-\xi\wedge\psi$.
	
	\begin{align*}
	\iota_{[X_1,X_2]}\varphi=&[\cL_{X_1},\iota_{X_2}]\varphi=\cL_{X_1}\iota_{X_2}\varphi-\iota_{X_2}d\iota_{X_1}\varphi-\iota_{X_2}\iota_{X_1}d\varphi\\
	=&\cL_{X_1}(-\xi_2\wedge\varphi-f_2\psi)-\iota_{X_2}d(-\xi_1\wedge\varphi-f_1\psi)-\iota_{X_2}\iota_{X_1}d\varphi\\
	=&-\cL_{X_1}\xi_2\wedge\varphi-\xi_2\wedge d(-\xi_1\wedge\varphi-f_1\wedge\psi)-\xi_2\wedge\iota_{X_1}d\varphi-(\cL_{X_1}f_2)\psi-f_2\cL_{X_1}\psi\\
	&-\iota_{X_2}(-d\xi_1\wedge\varphi+\xi_1\wedge d\varphi-df_1\wedge\psi-f_1d\psi)-\iota_{X_2}\iota_{X_1}d\varphi\\
	=&(-\cL_{X_1}\xi_2+\iota_{X_2}d\xi_1+\xi_2\wedge d\xi_1)\wedge\varphi-\xi_2\wedge(\iota_{X_1}+\xi_1\wedge) d\varphi\\
	&+(\xi_2\wedge df_1-\iota_{X_1}df_2+\iota_{X_2}df_1)\wedge\psi+f_1\xi_2\wedge d\psi-f_2d(g_1\varphi-\xi_1\wedge\psi)-f_2d(g_1\varphi-\xi_1\wedge\psi)\\
	&-\iota_{X_2}(\xi_1\wedge d\varphi+\iota_{X_1}d\varphi)-df_1\wedge(g_2\varphi-\xi_2\wedge \psi)+f_1\iota_{X_2}d\psi\\
	=&-(\cL_{X_1}\xi_2-\iota_{X_2}d\xi_1+g_2df_1-f_2dg_1)\wedge\varphi-(\iota_{X_1}df_2-\iota_{X_2}df_1)\wedge\psi\\
	&+f_1(\iota_{X_2}+\xi_2\wedge) d\psi+f_2(\xi_1\wedge d\psi+g_1d\varphi)\\
	&-(\iota_{X_2}+\xi_2)\wedge(\iota_{X_1}+\xi_1\wedge) d\varphi.
	\end{align*}
	A similar calculation holds for $\iota_{[X_1,X_2]}\psi$.  Combining the results gives
	\begin{align*}
	\iota_{\XX_1\circ \XX_2}(\varphi,\psi)=\big(&-(\iota_{X_2}+\xi_2\wedge)(\iota_{X_1}+\xi_1\wedge)d\varphi-f_2g_1d\varphi-f_2(\iota_{X_1}+\xi_1\wedge)d\psi+f_1(\iota_{X_2}+\xi_2\wedge)d\psi,\\
	&(\iota_{X_2}+\xi_2\wedge)(\iota_{X_1}+\xi_1\wedge)d\psi+g_2f_1d\psi+g_1(\iota_{X_2}+\xi_2\wedge)d\varphi-g_2(\iota_{X_1}+\xi_1\wedge)d\varphi\big)\\
	=&-(X_2,f_2,g_2,\xi_2)\cdot((\iota_{X_1}+\xi_1\wedge)d\varphi-f_1d\psi,g_1d\varphi+(\iota_{X_1}+\xi_1\wedge)d\psi)\\
	=&-(X_2,f_2,g_2,\xi_2)\cdot(X_1,f_1,g_1,\xi_1)\cdot (d\varphi,-d\psi)=-\XX_2\cdot\XX_1\cdot d(\varphi,\psi).
	\end{align*}
	So the involutive corresponds to an involutive subspace.
		
A similar argument holds for the twisted case $\iota_{\XX_1\circ_{H_3,H_2,F}\XX_2}(\varphi,\psi)=-\XX_2\cdot\XX_1\cdot d_{H_3,H_2,F}(\varphi,\psi)$.
\end{proof}

Let us consider a generalised almost contact structure generated by a cosymplectic structure on $M$ and examine the integrability condition.  An almost cosymplectic structure is a pair $(\theta,\eta)\in (\Omega^2(M),\Omega(M))$ satisfying $\eta\wedge\theta^n\neq 0$.  From standard results in contact geometry, there exists a Reeb vector field $R\in \Gamma(TM)$ such that $\iota_R\eta=1$ and $\iota_R\theta=0$.  A mixed pair $(\varphi,\psi,e_1,e_2)$ can be given by
\begin{align*}
\varphi=e^{i\theta},\quad \psi=\eta\wedge e^{i\theta},\quad e_1=\eta,\quad e_2=R.
\end{align*} 
We have $d\varphi=id\theta\wedge \varphi$, and $d\psi=d\eta\wedge\varphi+id\theta\wedge \psi$.  If $d\theta=d\eta=0$ (a cosymplectic structure) then $d_{0,0,0}(\varphi,\psi)=0$.  If $d\eta=\theta$ (a contact 1-form $\eta$) then $d_{0,d\eta,0}(\varphi,\psi)=0$.  In fact the pair $(\theta,\eta)$ will form a $(0,d\eta,0)$-generalised contact structure if $d\theta=0$.  We conclude the following:
\begin{itemize}
	\item A cosymplectic structure defines a $(0,0,0)$-generalised contact structure.
	\item A contact 1-form $\eta$ defines a $(0,d\eta,0)$-generalised contact structure.
	\end{itemize}

It is possible to describe a non-coorientable contact structure arising from a $TM/D$-valued 1-form $\eta$. In the case of a non-trivial line bundle $TM/D$, $\eta$ is not globally defined. Local trivialisations $\eta_\alpha$ and $\eta_\beta$ are related using transition functions $g_{\alpha\beta}$.  If the line bundle on a compact manifold is of the form $TM/D\cong T\sfS^1$, for an $\sfS^1$-foliation then $\eta$ satisfies the conditions of a $(0,0,\eta)$-transformation (\eqref{CurvBba} and \eqref{CocycleBba}) with twists $(0,0,d\eta)$. 

\begin{remark}
	In \cite{Poo09} an almost generalised structure (defined with $e_1\in \Gamma(TM)$, $e_2\in \Gamma(T^*M)$, $\lambda=0$) is called a generalised structure if $L\oplus \CC e_1$ is involutive.  A \emph{strong} generalised contact structure is a generalised contact structure where $L\oplus \CC e_2$ is involutive.  In \cite{Ald13} a generalised normal contact structure is a generalised contact structure arising from an invariant generalised complex structure $\cJ$ on $M\times \RR$.  In both cases a contact form $\eta$ with $d\eta\neq 0$ does not give a strong generalised contact structure.    
\end{remark}

An essential property of the definition of a generalised (almost) contact structure is that it is compatible with the $(B,b,a)$-symmetries.  The action on a Dirac pair $(\varphi,\psi)$ is: 
\begin{equation}\label{spinorsymm}
e^{(B,b,a)}(\varphi,\psi)=\left(e^{-B}\varphi+ae^{-B}\psi-\frac{1}{2}abe^{-B}\varphi,e^{-B}\psi-be^{-B}\varphi-\frac{1}{2}bae^{-B}\psi\right)=(\varphi',\psi'),
\end{equation}
where $\wedge$ has been omitted.  The action preserves the pairing,
\begin{align*}
(e^{(B,b,a)}(\varphi_1,\psi_1),e^{(B,b,a)}(\varphi_2,\psi_2))_M=((\varphi_1,\psi_1),(\varphi_2,\psi_2))_M,
\end{align*}  
and satisfies
\begin{align}\label{twistint}
d_{H'_3,H'_2,F'}(e^{(B,b,a)}(\varphi,\psi))-V'\cdot(e^{(B,b,a)}(\varphi,\psi))=e^{(B,b,a)}(d_{H_3,H_2,F}(\varphi,\psi)-V\cdot(\varphi,\psi)),
\end{align}
where
\begin{align*}
H'_3=&H_3+dB+a\wedge H_2+b\wedge F+\frac{1}{2}(da\wedge b+a\wedge db),\\
H'_2=&H_2+db, \quad F'=F+da,\quad V'=e^{(B,b,a)}V.
\end{align*}
This shows that given a $(H_3,H_2,F)$-involutive mixed pair, $(\varphi,\psi)$, there exists a $(H'_3,H'_2,F')$-involutive Dirac pair $e^{(B,b,a)}(\varphi,\psi)$.

Below two standard examples of generalised contact structures (first appearing in \cite{Igl04} for $M\times \RR$) are presented as reductions of generalised complex structures on $\sfS^1\hookrightarrow P\rightarrow  M$. 

\begin{example}[Almost symplectic to almost cosymplectic structure]\label{cosymp}
	Let $\omega\in\Gamma(\wedge^2T^*P)$ be a symplectic form, where $\sfS^1\hookrightarrow P\rightarrow M$, and $\dim(M)=m=2n+1$.  Consider a connection specified by locally by $\cA_\alpha=dt_\alpha+A_\alpha\in\Omega^1(P_\alpha)$ ($\pi^{-1}(U_\alpha)=U_\alpha\times \sfS^1$ defining a cover of $P$), with curvature $F=d\cA\in H^2(M,\ZZ)$.  
	The symplectic form is $\sfS^1$-invariant if it admits the decomposition
	\begin{align*}
	\omega_\alpha=\theta_\alpha+\cA_\alpha\wedge\eta_\alpha\in\Omega^2(P_\alpha),
	\end{align*}
	where $\omega$ is globally defined, but $\eta$ is not if the bundle is not trivial, and satisfies the usual cocycle conditions.  
	\begin{align*}
	0\neq \omega^{n+1}=(\theta+\cA\wedge\eta)^{n+1}=\sum^{n+1}_{k=0}\theta^{n+1-k}(\cA\wedge\eta)^k=(n+1)\theta^{n}\wedge\cA\wedge\eta,
	\end{align*}
	 giving $\eta\wedge\theta^{n}\neq 0$.  From standard results in contact geometry, there exists a Reeb vector field $R\in \Gamma(TM)$ such that $\iota_R\eta=1$ and $\iota_R\theta=0$.  $\theta$ is non-degenerate on $\ker(\eta)$, and  $\phi(X):=\iota_X\theta$ ($X\in\ker(\eta)$).  The generalised complex structure $\cJ_\omega$ is reduced to a generalised contact structure $(\Phi,e_1,e_2,\lambda)$:
	\begin{align*}
	\cJ_\omega=\left(\begin{matrix}
	0 & -\omega^{-1}\\ \omega & 0
	\end{matrix}\right)\quad \Rightarrow&\quad \Phi=\left(\begin{matrix}
	0 & -\theta^{-1}\\ \theta & 0
	\end{matrix}\right),\quad e_1=\eta,\quad e_2=R,\quad \lambda=0,\\
	\rho_{\cJ_\omega}=e^{i\omega}\quad\Rightarrow&\quad \varphi= e^{i\theta},\quad \psi=\eta \wedge e^{i\theta},
	\end{align*}
	where $\cJ_\omega$ is written in the splitting $TP\oplus T^*P$, $\Phi$ in the splitting $\ker(\eta)\oplus\text{Ann}(R)$, and $\rho_{\cJ_\omega}$ is a pure spinor associated to $\cJ_\omega$.
\end{example}
Let us look at the integrability conditions:
\begin{align*}
d\varphi=id\theta\wedge\varphi,\quad d\psi=d\eta\wedge \varphi+id\theta\wedge\psi.
\end{align*}
If $d\theta=0$ then $d_{0,d\eta,0}(\varphi,\psi)=0$.  Noting that $\omega=\theta+\cA\wedge\eta$, $d\omega=d\theta+d\alpha\wedge\eta+\cA\wedge d\eta$ is not necessarily zero.  So a generalised $d_{0,d\eta,0}$-contact structure can arise from a pre-symplectic structure. 
  
\begin{example}[almost complex to almost contact structure]\label{contact}
	Consider an almost complex structure $J\in\mathsf{End}(TP)$, on $\sfS^1\hookrightarrow P\rightarrow M$, where $\dim{M}=2n+1$, and the the $\sfS^1$-bundle specified the connection $\cA$, (with $F=d\cA$) given locally by $\cA_\alpha=dt_\alpha+A_\alpha\in\Omega^1(P_\alpha)$.  Given local coordinates, $x$, for $M$, and $t$ for $\sfS^1$, the almost complex structure is $\sfS^1$-invariant if there exists the decomposition  
	\begin{align*}
	J^I_K\partial_I\otimes dx^K= (\phi'^i_k-R^iA_k)\partial_i\otimes dx^k+R^i\partial_i\otimes\cA+\alpha_k\kappa\otimes dx^k.
	\end{align*}  
	Letting $\phi=\phi'-R\otimes A$,  the conditions $J^*=-J$ and $J^2=-1$ give 
	\begin{align*}
	\iota_R\alpha=1,\quad \Phi(R)=0=\phi^*(\alpha),\quad \phi^2(X)=-X+(\iota_X\alpha)R.
	\end{align*} 
	The generalised almost complex structure $\cJ_J$ reduces to a generalised almost contact structure $(\Phi,e_1,e_2,\lambda)$:
	\begin{align*}
	\cJ_{J}=\left(\begin{matrix}
	J & 0\\ 0 &-J^*
	\end{matrix}\right)\quad\Rightarrow&\quad \Phi=\left(\begin{matrix}
	\phi & 0\\ 0 &-\phi^*
	\end{matrix}\right),\quad e_1=\alpha,\quad e_2=R,\quad \lambda=0,\\
	\rho_{J}=\Omega_J\quad \Rightarrow &\quad \varphi=\Omega_\phi,\quad \psi=\alpha\wedge \Omega_\phi,
	\end{align*}
	where $\Omega_J\in \Omega^{2n+2,0}$ is the decomposable top form giving the pure spinor describing $\cJ_J$, and $\alpha\wedge \Omega_\phi:=\Omega_J$.
\end{example}
Let us look at the integrability conditions:
\begin{align*}
d\varphi=d\Omega_\phi,\quad d\psi=d\alpha\wedge \varphi+\alpha\wedge d\Omega_\phi.
\end{align*}
We require that $d\Omega_\phi=0$.  In this case we have $d_{0,d\alpha,0}(\varphi,\psi)=0$.

\subsection{Deformations of generalised contact structures}~\\
The $(B,b,a)$-transformations described in Section \ref{symmetries}, provide deformations of generalised contact structures.  While $B$-transformations have been studied before, the $(b,a)$-transformations have not been incorporated. The $\mathcal{K}_\pm(\kappa)$ ($\kappa\in\Gamma(T^*M)$) symmetries introduced by Sekiya in \cite{Sek12} are equivalent to the more geometrically natural $(b,a)$-transformations.

A Sekiya quadruple $(\Phi,e_1,e_2,\lambda)$, can be deformed to give another Sekiya quadruple $(\Phi',e'_1,e'_2,\lambda')$: 
	\begin{subequations}\label{Seksymm}
		\begin{align}
		\Phi'(v)=&e^B\Phi e^{-B}(v)-\langle v,a\rangle\Phi(b)-\langle v,b\rangle\Phi(a)+2\mu\langle v,a\rangle e_1+2\mu\langle v,b\rangle e_2\nonumber \\
		&+\langle e^B\Phi(a),v\rangle b-2\mu \langle e_1,a\rangle\langle v,a\rangle b-2\mu\langle e_2,a\rangle \langle v,b\rangle b-2\mu\langle e^B e_2,v\rangle b\label{Phiprime}\\
		&+\langle e^B\Phi(b),v\rangle a-2\mu \langle e_2,b\rangle\langle v,b\rangle a-2\mu\langle e_1,b\rangle \langle v,a\rangle a-2\mu\langle e^B e_1,v\rangle a. \nonumber\\
		&+\langle a,\Phi(b)\rangle\langle a,v\rangle b+\langle b,\Phi(a)\rangle\langle b,v\rangle a.\nonumber\\
		\mu'e'_1=&\mu e^Be_1-\lambda' b+\mu\langle e_1,b\rangle a+\mu\langle e_1,a\rangle b-e^B\Phi(b)\label{e1prime}\\
		\mu'e'_2=&\mu e^Be_2+\lambda' a+\mu\langle e_2,a\rangle b+\mu\langle e_2,b\rangle a-e^B\Phi(a)\label{e2prime}\\
		\lambda'=&\lambda+2\langle b,\Phi(a)\rangle+2\mu\langle e_1,a\rangle-2\mu\langle e_2,b\rangle\label{lambdaprime}
		\end{align}
	\end{subequations}
where $v=(X,\xi)\in\Gamma(\TT M)$.

This follows immediately from considering $e^{(B,b,a)}$ transformation on $\cJ_{\text{inv}}$, via
\begin{align*}
e^{(B,b,a)}\cJ_{\text{inv}}e^{(-B,-b,-a)}=\left(\begin{matrix} \Phi' & \mu' e'_1 & \mu' e'_2\\ -2\mu'\langle e'_2,\cdot\rangle & -\lambda' & 0\\
-2\mu'\langle e'_1\cdot\rangle & 0 & \lambda'\end{matrix}\right),
\end{align*}
where $\cJ_{\text{inv}}$ is the $\sfS^1$-invariant generalised complex structure associated to $(\Phi,e_1,e_2,\lambda)$ by \eqref{genSekiya}, and noting that $e^{(B,b,a)}e^{(-B,-b,-a)}\XX=\XX$. 

\begin{proposition}\label{twistingtrans}
	The transformation $e^{(B,b,a)}$ given by \eqref{Symm} maps a $(H_3,H_2,F)$-involutive generalised contact structure $(\Phi,e_1,e_2,\lambda)$ to a $(H'_3,H'_2,F')$-involutive generalised contact structure $(\Phi',e'_1,e'_2,\lambda')$ given by \eqref{Seksymm},
	where
	\begin{align*}
	H'_3=H_3+dB+a\wedge H_2+b\wedge F+\frac{1}{2}(da\wedge b+a\wedge db),\ H'_2=H_2+db,\ F'=F+da.
	\end{align*}
\end{proposition}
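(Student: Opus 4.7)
The plan is to reduce the statement to the corresponding fact for (twisted) generalised complex structures on the total space of an $\sfS^1$-bundle $\sfS^1\hookrightarrow P\rightarrow M$, exploiting the correspondence \eqref{genSekiya} together with the already-established transformation identity \eqref{twistint}. The proposition naturally splits into two assertions: first, that $(\Phi',e'_1,e'_2,\lambda')$ defined by \eqref{Seksymm} is again a Sekiya quadruple; second, that if the mixed pair representatives of $(\Phi,e_1,e_2,\lambda)$ are $(H_3,H_2,F)$-involutive, then those representing $(\Phi',e'_1,e'_2,\lambda')$ are $(H'_3,H'_2,F')$-involutive.

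For the algebraic part, I would lift $(\Phi,e_1,e_2,\lambda)$ to the $\sfS^1$-invariant generalised almost complex structure $\cJ_{\text{inv}}\in\End(\TT P)$ via the block decomposition \eqref{genSekiya}. Any $(B,b,a)$-transformation on $\TT M$ lifts (in a chosen trivialisation with connection $\cA$) to an ordinary $B$-transformation on $\TT P$, namely the one whose two-form is $B+\cA\wedge b+a\wedge\cA$. Since $B$-transformations are Courant algebroid automorphisms on $\TT P$ that preserve the structure class $\cJ^2=-\text{id}$, $\cJ^*=-\cJ$, the conjugate $e^{(B,b,a)}\cJ_{\text{inv}}\,e^{(-B,-b,-a)}$ is again an $\sfS^1$-invariant generalised almost complex structure. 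Reading off its block form according to \eqref{genSekiya} and comparing coefficients yields precisely the formulas \eqref{Phiprime}--\eqref{lambdaprime}; this is the calculation already indicated in the discussion following \eqref{Seksymm}, and it guarantees automatically that the output is a Sekiya quadruple.

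For the integrability part, I would argue directly at the level of mixed pairs. Pick a local representative $(\varphi,\psi,e_1,e_2)$ of $(\Phi,e_1,e_2,\lambda)$ and let $V=(X,f,g,\xi)$ be such that $d_{H_3,H_2,F}(\varphi,\psi)=V\cdot(\varphi,\psi)$. Apply the transformation \eqref{spinorsymm} to form $(\varphi',\psi')=e^{(B,b,a)}(\varphi,\psi)$. The identity \eqref{twistint} then gives
\begin{equation*}
d_{H'_3,H'_2,F'}(\varphi',\psi')-V'\cdot(\varphi',\psi')=e^{(B,b,a)}\bigl(d_{H_3,H_2,F}(\varphi,\psi)-V\cdot(\varphi,\psi)\bigr)=0,
\end{equation*}
with $V'=e^{(B,b,a)}V$, so $(\varphi',\psi')$ is $(H'_3,H'_2,F')$-involutive. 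One needs to check that $(\varphi',\psi',e'_1,e'_2)$ is a valid mixed pair representative of $(\Phi',e'_1,e'_2,\lambda')$: the non-degeneracy conditions \eqref{mixpairing} are preserved because $e^{(B,b,a)}$ preserves the pairing, and the compatibility relations \eqref{mixcompat} with $(e'_1,e'_2,\lambda')$ follow from compatibility of $(\varphi,\psi,e_1,e_2)$ with $(\Phi,e_1,e_2,\lambda)$ after conjugation by $e^{(B,b,a)}$, using the explicit formulas \eqref{e1prime}, \eqref{e2prime}, \eqref{lambdaprime}. Since representatives differ by multiplication by a nowhere-vanishing function, which only shifts the auxiliary section $V'$, the involutivity is a property of the structure $(\Phi',e'_1,e'_2,\lambda')$ itself.

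The main obstacle is bookkeeping rather than conceptual: verifying that the Clifford/spinor transformation \eqref{spinorsymm} intertwines the endomorphism transformation \eqref{Seksymm} and that the twist shifts in \eqref{twistint} match exactly the Maurer--Cartan correction $H'_3-H_3=dB+a\wedge H_2+b\wedge F+\tfrac{1}{2}(da\wedge b+a\wedge db)$. This is a finite, if tedious, calculation. Once these compatibilities are in place, the proposition is a direct consequence of \eqref{twistint}.
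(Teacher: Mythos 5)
Your proposal is correct and follows essentially the same route as the paper: the algebraic part is obtained by conjugating the lifted $\sfS^1$-invariant structure $\cJ_{\text{inv}}$ exactly as in the discussion preceding \eqref{Seksymm}, and the integrability part is the paper's one-line argument from the mixed pair description, preservation of the pairing, and \eqref{twistint}, which you have simply written out in full.
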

\begin{proof}
	This follows directly from the mixed pair description and the fact that the transformation $e^{(B,b,a)}$ preserves the pairing and \eqref{twistint}.
\end{proof}

\begin{example}[Symmetries]
	Any generalised almost contact structure gives a family of generalised almost contact structures using $(B,b,a)$-transformations.  Deforming Example \ref{cosymp} gives
	\begin{align*}
	&\varphi'=\left(1+a\wedge \eta-\frac{1}{2}a\wedge b\right)\wedge e^{-B+i\theta},\quad \psi'=\left(\eta-b-\frac{1}{2}b\wedge a\wedge\eta\right)\wedge e^{-B+i\theta}\\
	&\mu'e'_1=e^B\wedge\eta-\lambda'b+\iota_{\rho^*(b)}\theta^{-1}+\iota_{\iota_{\rho^*(b)}\theta^{-1}}B\\
	&\mu'e'_2=R+\iota_R B+\lambda' a+\frac{1}{2}(\iota_R a)b+\frac{1}{2}(\iota_R b)a+\iota_{\rho(a)}\theta^{-1}+\iota_{\iota_{\rho(a)}\theta^{-1}}B,\\
	&\lambda'=\iota_R b-\iota_{\rho^*(b)}\iota_{\rho^*(a)}\theta^{-1},
	\end{align*}
	where $\rho^*(a):T^*M\rightarrow \text{Ann}(R)$ is the projection. 
	
	Deforming Example \ref{contact} we get  
	\begin{align*}
	&\varphi'=\left(1+a\wedge\alpha-\frac{1}{2}a\wedge b\right)\wedge e^{-B}\wedge\Omega_\phi,\quad \psi'=\left(\alpha-b-\frac{1}{2}b\wedge a\right)\wedge e^{-B}\wedge\Omega_\phi,\\
	&\mu' e'_1=R+\iota_R B-\lambda' b+\frac{1}{2}(\iota_R b)a+\frac{1}{2}(\iota_R a)b+\phi^*(\rho(b)),\\
	&\mu'e'_2=\alpha+\lambda'a+\phi^*(\rho(a)),\\
	&\lambda'=\iota_R a.
	\end{align*}
\end{example}

\begin{remark}
	These examples show that $(b,a)$-transformations can change $\lambda$.  The $(b,a)$-transformation can be interpreted geometrically as twisting the $\sfS^1$-bundle.  The correspondence between $a$-transformations and twisting comes from the discussion on page 11.  The description on page 8 shows that the splitting of invariant sections of $TP$ and $T^*P$ correspond to an $a$-transformation and a dual, the $b$-transformation. 
\end{remark}

\begin{example}[Products \cite{Gom13}] Let $M=M_1\times M_2$, with projections $pr_i:M\rightarrow M_i$. If $(L_1,e_1,e_2)$ is a generalised almost contact structure on $M_1$ and $L_2$ is a generalised almost complex structure on $M_2$, then $(pr^*_1L_1\oplus pr^*_2L_2,pr^*_1e_1,pr^*_1e_2)$ is a generalised almost contact structure on $M$.  	
\end{example}

There are manifolds which admit generalised contact structures, but not contact structures.  A class of examples come from $\sfS^1$-bundles of nilmanifolds.  A nilmanifold is a homogenous space $M=\sfG/\Gamma$, where $\sfG$ is a simply connected nilpotent real Lie group and $\Gamma$ is a lattice of maximal rank in $\sfG$.  For the associated generalised complex structures on nilmanifolds, see \cite{Cav04}.  The structure of a particular nilpotent Lie algebra can be given by specified listing exterior derivatives of the elements of a Malcev basis, as an $n$-tuple of two-forms $d\epsilon_k=\sum c^{ij}_k\epsilon_i\epsilon_j$, (henceforth $\wedge$ is omitted, so that $\epsilon_i\wedge \epsilon_j=\epsilon_i\epsilon_j$).

\begin{example}[$(0,0,12,13,14+23,34+52)\times \sfS^1$]
	Specify a 6-dimensional nilmanifold via the coframe $\{\epsilon_i \}$, $i=1,\dots,6$ satisfying: 
	\begin{align*}
	d\epsilon_1=0,\ d\epsilon_2=0,\ d\epsilon_3=\epsilon_1\epsilon_2,\ d\epsilon_4=\epsilon_1\epsilon_3,\ d\epsilon_5=\epsilon_1\epsilon_4+\epsilon_2\epsilon_3,\ d\epsilon_6=\epsilon_3\epsilon_4+\epsilon_5\epsilon_2.
	\end{align*}
	Let $E=M\times \sfS^1$, where $M$ is the nilmanifold specified by ($0,0,12,13,14+23,34+52$), and $\sfS^1$ is parameterized by $t$.  The one-form $dt$ gives a flat connection on $S^1$.  Define $\eta=\pi^*dt$. Let $R=\pi^*\partial_t$ be the corresponding Reeb vector field:  
	\begin{align*}
	\Omega=&\epsilon_1+i\epsilon_2,\\
	B=&\epsilon_2\epsilon_6-\epsilon_3\epsilon_5+\epsilon_3\epsilon_6-\epsilon_4\epsilon_5,\\
	\omega&=\epsilon_3\epsilon_6+\epsilon_4\epsilon_5,\\
	\varphi=&e^{B+i\omega}\Omega,\quad \psi=\eta e^{B+i\omega}\Omega,\quad e_1=\eta,\ e_2=R,\ \lambda=0.
	\end{align*}
\end{example}

\begin{example}[$\sfS^1$-bundles on nilmanifolds]
	There are manifolds which have no symplectic or complex structures, but do have generalised complex structures.  
	In \cite{Cav04} generalised complex structures are constructed on nilmanifolds which do not admit symplectic, or complex structures.  Each of these examples define a generalised complex structure via a pure spinor $\rho=\Omega\wedge e^{B+i\omega}$. This construction can be modified to find generalised contact structures which do not admit contact structures.  Take  $\sfS^1\hookrightarrow E\rightarrow M$.  Choose an $\sfS^1$-invariant connection $\cA$.  Define a vector field $R$ such that $\iota_R \cA=1$.  Take the generalised complex structure described by the pure spinor $\rho=\Omega\wedge e^{i\omega+B}$. The corresponding mixed pair is
	\begin{align*}
	\varphi=e^{B+i\omega}\Omega,\quad \psi=\cA e^{B+i\omega}\Omega,\quad e_1=\cA,\ e_2=\kappa,\ \lambda=0.
	\end{align*}
\end{example}

\begin{example}
	Consider $\RR^5$, described using coordinates $\{t,z_1,z_2 \}$ where $z_1,z_2$ are standard coordinates in $\CC^2\cong \RR^4$.  A generalised complex structure is defined by the pure spinor $\rho=z_1+dz_1 dz_2$. When $z_1=0$, $\varphi=dz_1 dz_2$ defines a standard complex structure, whereas $z_1\neq 0$, $\varphi$ defines a $B$-symplectic structure since
	$\rho=z_1\exp(dz_1 dz_2/z_1)$.  A generalised contact structure is given by 
	\begin{align*}\varphi=z_1+dz_1 dz_2,\quad \psi=dt (z_1+dz_1 dz_2),\quad e_1=dt,\ e_2=\partial_t.
	\end{align*}
\end{example}

\section{Generalised coK\"ahler geometry}\label{genSasakigeom}
Generalised geometric structures are of great interest in string theory, due to the fact that T-duality is associated to $\mathfrak{so}(T\oplus T^*)=\End(T)\oplus \wedge^2 T^*\oplus \wedge^2 T$.  In particular, the generalised metric incorporates the Riemannian metric, $\sfg$, and $B$-field associated with the Neveu-Schwarz flux $H$, in the bosonic sector of supergravity.  Generalised K\"ahler structures are equivalent to bi-hermitian structures and are the most general geometry associated to 2-dimensional target space models with $\cN=(2,2)$ supersymmetry \cite{Gat84}.  

CoK\"ahler structures are the odd-dimensional counterpart to K\"ahler structures.  The relationship between K\"ahler and coK\"ahler structures is described in \cite{Li08,Baz14}. Li gave a structure result for compact coK\"ahler manifolds stating that such a manifold is always a K\"ahler mapping torus. The coK\"ahler structure on an odd-dimensional manifold $M$ can be associated to a K\"ahler structure on an $\sfS^1$-bundle (using a symplectomorphism) \cite{Li08}.  Further results on coK\"ahler structures were given in \cite{Baz14}.

Generalised coK\"ahler structures have appeared in the literature before \cite{Gom15}.  The definition given in \cite{Gom15} deals with generalised K\"ahler structures on $M_1\times M_2$, and the definition is compatible with $B$-transformations.  In this note we will consider the case where $M_2=\sfS^1$, but will not restrict to product manifolds, instead considering principal circle bundles, and the definition is compatible with the full $(B,b,a)$-transformations. Generalised coK\"ahler structures will be presented as $\sfS^1$-invariant reductions of generalised K\"ahler structures.

\begin{remark}
	Generalised K\"ahler structures play an important role in string theory.  In \cite{Gat84} generalised K\"ahler structures (written as a bi-Hermitian structures) appear in the study of $\cN=(2,2)$ non-linear sigma models with torsion.  The torsion arises from the connections $\nabla^\pm=\nabla^{\text{LC}}\pm g^{-1}H$, where $\nabla^{\text{LC}}$ is the Levi-Civita connection. Abelian T-duality can be carried out when the metric has an $\sfS^1$-isometry, and the T-duality procedure involves Kaluza-Klein reduction.  T-duality is most interesting when the $\sfS^1$-isometry corresponds to a topologically non-trivial $\sfS^1$-bundle.  In this case there is an interesting relationship between topology and $H$-flux \cite{Bou04a,Bou04b}.  The study of $\sfS^1$-reductions of generalised K\"ahler structures is interesting in this context.
\end{remark}

\subsection{Generalised metric structure}
The inner product \eqref{innerprod} is non-degenerate and a generalised contact metric can be constructed using maximally isotropic subspaces 
\begin{align*}
\cG(\XX_1,\XX_2)=\langle \XX_1,\XX_2\rangle|_{C_+}-\langle \XX_1,\XX_2\rangle|_{C_-},
\end{align*}
for $\XX_1,\XX_2\in\Gamma(E)\cong \Gamma(TM)\oplus C^\infty(M)\oplus C^\infty(M)\oplus \Gamma(T^*M)$,
as in the case of generalised geometry on $\TT M$, see Section \ref{gengeomstructures}.  In this case we have 
\begin{align}\label{GenContactMetric}
C_\pm=\{(X,f,g,\xi)\in\Gamma(E):g=\pm fh^2,\ \xi=\pm \sfg(X,\cdot)\}
\end{align}
for some $h\in C^\infty(M)$.  This satisfies
\begin{align*}
\langle(X,f,\pm fh^2,\pm\sfg(X,\cdot)),(X,f,\pm fh^2,\pm\sfg(X,\cdot))\rangle=\pm\sfg(X,X)\pm f^2h^2,
\end{align*}
verifying that $C_\pm$ describe the maximal positive/negative definite subbundles.  

As $\langle\cdot,\cdot\rangle$ is invariant under $(B,b,a)$-transformations, the subbundles $C_\pm$ defining a generalised metric $\cG$ can be transformed to $e^{(B,b,a)}C_\pm$ defining a generalised metric $\cG'=e^{(B,b,a)}\cG e^{-(B,b,a)}$.  The maximal subspaces are given by
\begin{align}\label{GenContactMetricBba}
C_\pm=\{(X,\xi,f,g):&\xi=\sfg(X,\cdot)+B(X,\cdot)-fb-fh^2a-2\langle X,b\rangle a,\\
&g=fh^2+2\langle X,b\rangle+2h^2\langle X,a\rangle+\langle X,b\rangle\langle X,a\rangle \}.\nonumber
\end{align}
All subspaces $C_\pm$ can be described in the form \eqref{GenContactMetricBba} for some choice of $(\sfg,h,B,b,a)$.
\begin{definition}
	A \emph{generalised coK\"ahler structure} on an odd-dimensional manifold $M$, consists of two generalised $(H_3,H_2,F)$-contact structures $(L_1,e^{(1)}_1,e^{(1)}_2,\lambda_1)$ and $(L_2,e^{(2)}_2,e^{(2)}_2,\lambda_2)$ whose associated Sekiya quadruples $\cJ_1=\{\Phi_1,e^{(1)}_1,e^{(1)}_2,\lambda_1\}$ and $\cJ_2=\{\Phi_2,e^{(2)}_1,e^{(2)}_2,\lambda_2\}$ give a generalised K\"ahler structure. 
\end{definition}

	The commuting condition $\cJ_1\cJ_2=\cJ_2\cJ_1$ places the following restrictions on the Sekiya quadruples  $(\Phi_1,e^{(1)}_1,e^{(1)}_2,\lambda_1)$, and $(\Phi_2,e^{(2)}_1,e^{(2)}_2,\lambda_2)$, satisfying:
	\begin{align*}
	\CC e^{(1)}_1\oplus \CC e^{(1)}_2=&\CC e^{(2)}_1\oplus \CC e^{(2)}_2:=\cE;\\
	\Phi_1\Phi_2(v)=&\Phi_2\Phi_1(v)\quad\forall v\in\cE^\perp\\
	e^{(1)}_1=e^{(2)}_1,\ e^{(1)}_2=e^{(2)}_2,\ {\text{when }}\lambda_1=\lambda_2\quad \text{or}&\quad e^{(1)}_1=e^{(2)}_2,\ e^{(1)}_2=e^{(2)}_1,\ \forall\lambda_1,\lambda_2\in\RR. 
	\end{align*}

\begin{remark}
	If $\lambda_1=\lambda_2=0$ then there is a $\sfO(1,1)$-freedom in the description, and $e^{(1)}_1=e^{(2)}_2$, and $e^{(1)}_2=e^{(2)}_1$ can be chosen.    
\end{remark}

\begin{example}[coK\"ahler] A \emph{coK\"ahler structure} on an odd-dimensional manifold $M$, is given by the quadruple $(J,R,\eta,\sfg)$, where $(J,R,\eta)$ is an almost contact structure and $\sfg$ is a Riemannian metric satisfying $\sfg(JX,JY)=g(X,Y)-\eta(X)\eta(Y)$ for all $X,Y\in\Gamma(TM)$, and the integrability conditions $[J,J]=0,\quad d\omega=d\eta=0$,	
where $\omega(X,Y):=g(JX,Y)\in \Omega^2(M)$.  This defines a generalised coK\"ahler structure with $(\lambda=H_3=H_2=F=0)$:
\begin{align*}
\varphi_1=e^{i\omega},\ \psi_1=\eta\wedge e^{i\omega},\ e^{(1)}_1=\eta,\ e^{(1)}_2=R,\quad \varphi_2=\Omega_J,\ \psi_2=\eta\wedge \Omega_J,\ e^{(1)}_1=\eta,\ e^{(1)}_2=R.
\end{align*}
\end{example}

\begin{example}[Generalised K\"ahler to generalised coKahler]\label{GenSasaki}
	Consider the reduction of a generalised K\"ahler structure to produce a generalised coK\"ahler structure.  It was shown in Example \ref{cosymp} (Example \ref{contact}) that the reduction of a symplectic (complex) structure (over the same $\sfS^1$-bundle) gives
	\begin{align*}
	\cJ_\omega{}_{\text{inv}}=\left(\begin{matrix}
	0 & -\omega^{-1} & 0 & \eta \\ \omega & 0 & R & 0\\
	0 & -2\langle\eta,\cdot\rangle & 0 & 0\\
	-2\langle R,\cdot\rangle & 0 & 0 & 0 
	\end{matrix}\right),\quad \cJ_\Omega{}_{\text{inv}}=\left(\begin{matrix}
	-J & 0 & -R & 0\\ 0 &J^* & 0 & \eta\\
	2\langle R,
	\cdot\rangle & 0 & 0 & 0\\
	0 & -2\langle \eta,\cdot\rangle & 0 & 0  
	\end{matrix}\right),	
	\end{align*}
	where $\omega$ and $J$ are non-degenerate on $D$.  The condition that $-\cJ_\omega{}_{\text{inv}}\cJ_\Omega{}_{\text{inv}}=\cG$ for 
	\begin{align*}
	\cG=\left( \begin{matrix}
	0 & \sfg^{-1} & 0 & 0\\ 
	\sfg & 0 & 0 & 0\\
	0 & 0 & 0 & 1\\
	0 & 0 & 1 & 0
	\end{matrix}\right),
	\end{align*}
	requires that $\omega(JX,Y)=\sfg(X,Y)$, giving a transverse K\"ahler structure, $\langle R,\eta\rangle=\frac{1}{2}$, $\langle R,R\rangle=0=\langle \eta,\eta\rangle$.  
	
	The almost generalised complex structures $\cJ_\omega{}_{inv}$ and $\cJ_\Omega{}_{inv}$ will define a generalised coK\"ahler structure when $H_2=d\alpha=d\eta$ (see Examples \ref{cosymp} and \ref{contact} for notation).
\end{example}
\begin{example}[Twisted generalised coK\"ahler]
	It is clear from Example \ref{GenSasaki} that the reduction of a generalised K\"ahler structure can produce a generalised coK\"ahler structure.  It is possible to deform any generalised coK\"ahler structure $(\cJ_\omega{}_{\text{inv}},\cJ_\Omega{}_{\text{inv}},\cG)$ to get another:
	 \begin{align*}
	 (e^{(B,b,a)}\cJ_\Omega{}_{\text{inv}}e^{-(B,b,a)},e^{(B,b,a)}\cJ_\omega{}_{\text{inv}}e^{-(B,b,a)},e^{(B,b,a)}\cG e^{-(B,b,a)}).  
	 \end{align*}
\end{example}

\begin{definition}
	A \emph{generalised almost coK\"ahler-Einstein structure} on an odd-dimensional manifold $M$ (with $m=\dim(M)$) is described by two mixed pairs $(\varphi_1,\psi_1)$ and $(\varphi_2,\psi_2)$ satisfying
	\begin{align*}
	((\varphi_1,\psi_1),(\bar{\varphi}_1,\bar{\psi}_1))_M=c((\varphi_2,\psi_2),(\bar{\varphi}_2,\bar{\psi}_2))_M,
	\end{align*}
	where $c\in \RR$ can be scaled to $+1$ or $-1$ by scaling $(\varphi_1,\psi_1)$.  A generalised almost coK\"ahler-Einstein structure is an generalised coK\"ahler-Einstein structure if $(\varphi_1,\psi_1,e_1,e_2)$ and $(\varphi_2,\psi_2,e_1,e_2)$ are generalised $(H_3,H_2,F)$-contact structures.
\end{definition}

\begin{example}[$\sfS^1$-invariant generalised Calabi-Yau]\label{ReductionCalabiYau}
	Let $N=M\times\sfS^1$ be an even dimensional manifold with an $\sfS^1$-invariant generalised Calabi-Yau structure $(\rho_1,\rho_2)$.  The decompositions $\rho_j=\varphi_j+idt\psi_j$ ($j=1,2$) defines a generalised coK\"ahler-Einstein structure:  $(\varphi_1,\psi_1,e^{(1)}_1=\partial_t,e^{(1)}_2=dt)$ and $(\varphi_2,\psi_2,e^{(1)}_1=\partial_t,e^{(1)}_2=dt)$, where $\lambda=H_3=H_2=F=0$.  
\end{example}

\begin{example}[coK\"ahler-Einstein]
	A coK\"ahler-Einstein structure on an odd-dimensional manifold $M$, is a Ricci-flat coK\"ahler structure.  A coK\"ahler structure has an associated cosymplectic structure $(\eta,\theta)$.  Consider $N=M\times \sfS^1$, with $\sfS^1$ parameterised by $t$, and $pr_1(N)=M$.  Let $\omega=dt\wedge pr^*_1\eta+pr^*_1\theta$, and $\sfg_N=pr^*_1\sfg+(dt)^2$.  This defines a Calabi-Yau structure on $N$.  A Calabi-Yau structure defines a generalised Calabi-Yau structure (Example \ref{CalabiYauEx}).  Using the reduction procedure (Example \ref{ReductionCalabiYau}) we get a generalised $(0,0,0)$-coK\"ahler-Einstein structure.
\end{example}

\begin{example}
	A $(B,b,a)$-transformation maps an involutive mixed pair to another involutive mixed pair, preserving the length.  It follows that a generalised coK\"ahler(-Einstein) structure, $((\varphi_1,\psi_1),(\varphi_2,\psi_2))$, is mapped to another coK\"ahler(-Einstein) structure by a $(B,b,a)$-transformation, $((e^{(B,b,a)}\varphi_1,e^{(B,b,a)}\psi_1),(e^{(B,b,a)}\varphi_2,e^{(B,b,a)}\psi_2))$.	A generalised $(H_3,H_2,F)$-contact structure is mapped to generalised $(H_3+dB,H_2+db,F+da)$-contact structure.
\end{example}

\section{T-duality}\label{Tduality}
T-duality provides an isomorphism between between Courant algebroids defined on two torus bundles $\sfT^k\hookrightarrow E\rightarrow M$ and $\tilde{\sfT}^k\hookrightarrow \tilde{E}\rightarrow M$.  The topological aspects are described in \cite{Bou04a,Bou04b}, and the isomorphism of Courant algebroid structures in \cite{Cav10}.  The situation is described by the following diagram:
\begin{align*}
\xymatrix{
	& \ar[ld]^{p} (E\times_M \tilde{E},p^*H-\tilde{p}^*\tilde{H}) \ar[rd]^{\tilde{p}}& \\
	{e_1\circ_H e_2}  \ (\sfT^k\hookrightarrow E\rightarrow M,H)\ar[rd]^{\pi} & & (\tilde{\sfT}^k\hookrightarrow \tilde{E}\rightarrow M,\tilde{H})\ar[ld]^{\tilde{\pi}}\ \  \tilde{e_1}\circ_{\tilde{H}}\tilde{e_2} \\
	& M & 
}
\end{align*}
$E$ and $\tilde{E}$ are \emph{T-dual} if $p^*H-\tilde{p}^*\tilde{H}=d\cF$, for some $T^{2k}$-invariant 2-form on the correspondence space $\cF\in\Omega^2(E\times_M \tilde{E})$, such that $\cF:\mathfrak{t}^k_E\otimes \mathfrak{t}^k_{\tilde{E}}\rightarrow \RR$ is non-degenerate.  These conditions place restraints on $H$.  $H$ is \emph{admissible} if it satisfies \cite{Cav10}:
\begin{align*}
H(X,Y,\cdot\ )=0,\quad \forall X,Y\in \mathfrak{t}^k_E\in E.
\end{align*} 
The requirement that $H$ is admissible ensures that the T-dual bundle $\tilde{E}$ is in fact a torus fibration.  If $H$ is not admissible the T-dual is a non-commutative space \cite{Mat04}, and cannot be described by generalised geometry.  

The T-duality map $\tau_\cF:(\Omega^\bullet_{\sfT^k}(M),d_{H})\rightarrow (\Omega^\bullet_{\tilde{\sfT}^k}(\tilde{M}),d_{\tilde{H}})$, is given by the formula 
\begin{align*}
\tau_\cF(\varphi+i\cA\psi)=\int_{\sfT^k}e^\cF p^*(\varphi+i\cA\psi),
\end{align*}
where $(\varphi,\psi)$ is a mixed pair, $\cF=-\cA\tilde{\cA}$, and $\cA$, $\tilde{\cA}$, denote the connections specifying the tori $\sfT^k$ and $\tilde{\sfT}^k$ respectively.  The map $\tau_\cF$ can be seen as the combination of a pullback from $E$ to the correspondence space $E\times_M\tilde{E}$, a $B$-transformation $e^\cF$, and then the pushforward to $\tilde{E}$.  This can be viewed as a type of geometric Fourier transform.

The description of T-duality for generalised (almost) contact structures on the trivial bundle $E=M\times \RR$ is given in \cite{Ald13}.

Given the interpretation of generalised contact structures as $\sfS^1$-reduced generalised complex structures, $\sfT^k$-duality of generalised contact structures is $\sfT^{k+1}$-duality of the corresponding generalised complex structure.  In \cite{Cav10}, T-duality for circle bundle is considered as an example.  The killing vector generates a $\sfS^1$-foliation, and considering $\sfS^1$-invariant fields, the Courant bracket \eqref{Dorf} is reduced to \eqref{RedCourant}.  T-duality corresponds to the interchange $F,f\leftrightarrow H_2,g$.  Contact geometry corresponds to an extra $\sfS^1$-invariant reduction, but not the interchange and pushforward.

In \cite{Ald13} T-duality in the cone direction, $t$, is considered.  In this case the mixed pair $(\varphi,\psi)$ is mapped to the mixed pair $(\psi,\varphi)$.   A $b$-transformation is interpreted as a change in connection, and hence fibering for the $\sfS^1$-bundle defining the generalised contact structure.  An $a$-transformation corresponds to a choice of connection in the T-dual direction.  

\begin{proposition}
	T-duality maps a generalised coK\"ahler(-Einstein) structure to another generalised coK\"ahler(-Einstein) structure.
\end{proposition}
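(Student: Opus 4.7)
The plan is to reduce the statement to the analogous fact for generalised K\"ahler structures, which is established by Cavalcanti--Gualtieri in \cite{Cav10}. By the definition in Section \ref{genSasakigeom}, a generalised coK\"ahler structure on $M$ is the $\sfS^1$-invariant reduction of a generalised K\"ahler structure on a principal $\sfU(1)$-bundle $P\to M$; similarly the Einstein version descends from a generalised K\"ahler--Einstein structure on $P$. The strategy is therefore: lift T-duality along $\sfT^k\hookrightarrow E\to M$ to T-duality along $\sfT^{k+1}\hookrightarrow\hat{P}\to M$ on a total space containing $P$, apply the Cavalcanti--Gualtieri theorem upstairs, and then push the result back down by reducing along the $\sfS^1$ fiber on the T-dual side.

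Concretely, I would form the fiber product $\hat{P}=P\times_M E$, which is a $\sfT^k$-bundle over $P$ and a $\sfT^{k+1}$-bundle over $M$ carrying commuting $\sfT^k$- and $\sfS^1$-actions. The generalised K\"ahler structure $(\cJ_1^{\text{inv}},\cJ_2^{\text{inv}},\cG^{\text{inv}})$ pulls back to $\hat{P}$, is $\sfT^k$-invariant by construction and $\sfS^1$-invariant by hypothesis, and the admissibility of $H$ on $E$ lifts to admissibility for the $\sfT^k$-direction on $\hat{P}$. By Cavalcanti--Gualtieri, $\tau_\cF$ sends this structure to a generalised K\"ahler structure on the T-dual $\tilde{\hat{P}}$. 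Because the $\sfS^1$-factor is independent of the T-duality torus, $\tau_\cF$ intertwines the two $\sfS^1$-actions, so the T-dual is still $\sfS^1$-invariant, and the reduction procedure of Section \ref{gencontactstruc} yields a generalised coK\"ahler structure on the T-dual base $\tilde{M}$.

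For the Einstein version the extra input is that $\tau_\cF$ preserves the Mukai pairing up to a constant depending only on the fiber volume. Writing the K\"ahler pair upstairs as $\rho_j=\varphi_j+i\cA\wedge\psi_j$, the constant-ratio condition $(\rho_1,\bar\rho_1)_{\hat P}=c(\rho_2,\bar\rho_2)_{\hat P}$ is equivalent to the coK\"ahler--Einstein condition on the mixed pairs, and is transported through $\tau_\cF$ with a uniform rescaling; the resulting identity on $\tilde M$ is exactly the coK\"ahler--Einstein condition for the T-dual mixed pairs.

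The main obstacle is verifying that $\sfS^1$-reduction and T-duality genuinely commute, so that what one obtains by first reducing $(\cJ_1^{\text{inv}},\cJ_2^{\text{inv}})$ to a generalised coK\"ahler structure on $M$ and then T-dualising (via the contact T-duality map recalled just above the proposition) agrees with first T-dualising on $\hat{P}$ and then reducing. This is essentially an interplay between admissibility, invariance, and the connection choices $(\cA,\tilde{\cA})$, and must in particular accommodate T-duality performed along the reduction circle itself, where $\tau_\cF$ swaps $\varphi\leftrightarrow\psi$; this swaps the roles of the two generalised contact structures in the coK\"ahler pair but preserves the overall structure. Once this compatibility is in place, the preservation of both involutivity and the Mukai identity follows automatically from the K\"ahler case.
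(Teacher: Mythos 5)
Your argument is correct in substance but is a considerably more roundabout route than the one the paper takes. The paper's proof is a two-line appeal to properties of the map $\tau_\cF$ acting directly on mixed pairs: $\tau_\cF$ preserves the pairing $(\cdot,\cdot)_M$ and sends a mixed pair to a mixed pair (intertwining $d_{H_3,H_2,F}$ with its dual), so both defining conditions of a generalised coK\"ahler(-Einstein) structure --- the compatibility of the two Sekiya quadruples and the constant ratio of pairings --- are transported automatically. What you do instead is unpack the definition of $\tau_\cF$ back into its correspondence-space form: lift to $\hat{P}=P\times_M E$, invoke the Cavalcanti--Gualtieri theorem for generalised K\"ahler structures upstairs, and reduce along the $\sfS^1$ fibre on the dual side. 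The ``main obstacle'' you flag --- that $\sfS^1$-reduction and T-duality commute --- is precisely the issue the mixed-pair formulation is designed to make invisible, since $\tau_\cF(\varphi+i\cA\psi)=\int_{\sfT^k}e^{\cF}p^*(\varphi+i\cA\psi)$ is already defined on the reduced data; in your formulation that compatibility has to be checked by hand, and you leave it as a sketch rather than a verification, which makes your write-up no more complete than the paper's but substantially longer. Your approach does buy something: it makes explicit that the result is inherited from the generalised K\"ahler case and correctly isolates where the work would lie if one wanted a fully rigorous treatment (admissibility of $H$, independence of the reduction circle from the T-duality torus, and the swap $\varphi\leftrightarrow\psi$ when dualising along the reduction circle itself, as in \cite{Ald13}). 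One small imprecision: the paper's definition of a generalised coK\"ahler structure is intrinsic (two generalised $(H_3,H_2,F)$-contact structures whose Sekiya quadruples form a generalised K\"ahler structure), not literally ``a reduction of a generalised K\"ahler structure on $P$''; to start your argument you need to pass through the correspondence \eqref{genSekiya} between Sekiya quadruples and $\sfS^1$-invariant structures on $P$, which you assume implicitly.
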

\begin{proof}
	T-duality preserves the pairing, and maps a mixed pair to another mixed pair. 
\end{proof}
\section{Contact line bundles vs reduction}\label{Linebundle}

It has recently been shown that generalised contact geometry has a conceptually nice description as generalised geometry on the generalised derivation bundle $\DD L\cong \fD L\oplus \fJ^1L$, for a (possibly non-trivial) line bundle $L$ \cite{Vit15a,Vit15b}.  This Section briefly outlines the description, and relates this to the current note.  In particular a generalised contact structure viewed as a reduced generalised complex structure, $\cJ|_{\sfS^1}$, is the $\sfS^1$-bundle version of the generalised complex structure $\cI\in\End(\DD L)$, and a mixed pair $(\varphi,\psi)$ are associated to a pure spinor $\varpi\in\Gamma(\wedge^\bullet\fJ^1 L,L)$.

Many interesting examples of contact structures are in fact non-coorientable, that is they are not defined by a globally defined contact one-form.  Instead contact structures are determined by a line bundle $L=TM/D$, as described in Section \ref{gencontactstruc}.  It is of interest to have a formalism that allows the description of non-trivial line bundles, which additionally makes the symmetries explicit.

The description of generalised contact bundles is given via the Atiyah (or gauge) algebroid, defined on $\DD L=\fD L\oplus \fJ^1 L$, where sections of $\fD L$ are derivations of $L$, and $\fJ^1L$ is the first jet bundle of $L$.  A derivation $\nabla\in\fD E$ has a unique {\emph symbol} $\sigma:\fD E\rightarrow TM$ such that, for $f\in C^\infty(M)$, $\lambda\in\Gamma(E)$, 
\begin{align*}
\nabla(f\lambda)=(\sigma\nabla)(f)\lambda+f\nabla\lambda=X(f)\lambda+f\nabla\lambda,
\end{align*}
where $X=\sigma(\nabla)$.  This makes it clear that $\fD E$ is part of the exact sequence
\begin{equation*}
\xymatrix{0\ar[r]& \mathfrak{gl}(E) \ar[r]  & \fD E \ar[r]^{\sigma}  & TM \ar[r]  & 0}. \label{DEsplit}
\end{equation*}
There is a natural Lie algebroid structure associated with $\fD E$, with the Lie bracket given by the commutator of derivations, and the anchor given by $\sigma$.  In the case of a line bundle the induced map on sections gives:
\begin{align}\label{splitDL}
\xymatrix{0\ar[r]& \Gamma(\mathfrak{gl}(L))\cong C^\infty(M) \ar[r]  & \Gamma(\fD L) \ar[r]^{\sigma}  & \Gamma(TM) \ar[r]  & 0},
\end{align}
giving $\Gamma(\fD L)\cong \Gamma(TM)\oplus C^\infty(M)$.  Sections $(X,f)\in(\Gamma(TM),C^\infty(M))$ are the line bundle version of the $\sfS^1$-invariant sections of $TP$ for $\sfS^1\hookrightarrow P\rightarrow M$.  

The 1-jet bundle $\fJ^1 E$ can be defined, at a point $p\in M$, by the equivalence relation in $\Gamma(E)$:
\begin{align*}e_1\sim e_2\leftrightarrow e_1(p)=e_2(p),\quad d\langle e_1,\zeta\rangle=d\langle e_2,\zeta\rangle,\quad \forall \zeta\in\Gamma(E^*).
\end{align*}
There exists $\pp:\fJ^1 E\rightarrow E$, such that $\ker(\pp)\cong\mathsf{Hom}(TM,E)$, giving
\begin{equation*}
\xymatrix{0\ar[r]& \mathsf{Hom}(TM,E) \ar[r]  & \fJ^1 E \ar[r]^{\pp}  & E  \ar[r] & 0}. \label{JEsplit}
\end{equation*}
In the case of a line bundle the induced map on sections gives:
\begin{align}\label{splitJL}
\xymatrix{0\ar[r]& \Gamma(\mathsf{Hom}(TM,L))\cong \Gamma(T^*M) \ar[r]  & \Gamma(\fJ^1 L) \ar[r]  & \Gamma(L)\cong C^\infty(M)  \ar[r] & 0},
\end{align}
giving $\Gamma(\fJ^1L)\cong \Gamma(T^*M)\oplus C^\infty(M)$.  Sections $(\xi,g)\in (\Gamma(T^*M), C^\infty(M))$ are the line bundle version of the $\sfS^1$-invariant sections of $T^*P$.  

It is shown in \cite{Che07} that $\fD E$ is $E$-dual to $\fJ^1 E$, there is a non-degenerate $E$-valued pairing $\langle\cdot,\cdot\rangle_E:\fD E\times \fJ^1 E \rightarrow E$.  For sections $\nabla\in\Gamma(\fD E)$ and $\chi=\sum fj^1e\in \Gamma(\fJ^1 E)$ the pairing is given by $\langle \nabla,\chi\rangle_E=\sum f\nabla(e)$.  The pairing between $\fD E$ and $\fJ^1 E$ has a geometric interpretation, $\langle \nabla,\chi \rangle$ can be viewed as the covariant derivation of $\chi$ with respect to $\nabla$. 

Given the $E$-valued pairing between $\fD E$ and $\fJ^1 E$, there is a natural $E$-Courant (more specifically an omni-Lie) algebroid defined on the generalised derivation bundle $\DD E$ given by:
\begin{equation*}
\xymatrix{0\ar[r]& \fJ^1 E \ar[r]  & \DD E \ar[r]^{\rho}  & \fD E  \ar[r] & 0}.
\end{equation*}
The definition and properties of omni-Lie and $E$-Courant algebroids can be found in \cite{Che07}, and \cite{Che08}, respectively.  The $E$-Courant algebroid can be viewed as a derived bracket for the differential $d_{\fD E}$ acting on the complex $\Omega^k_{E}:=\Gamma(\wedge^k\fJ^1 E,E)$:
\begin{align*}
d_{\fD E}\varpi(\nabla_0,\nabla_2,\dots,\nabla_k)=&\sum^k_{i=0}(-1)^i\nabla_i(\varpi(\nabla_0,\dots,\hat{\nabla}_i,\dots,\nabla_k))\\
&+\sum_{i<j}(-1)^{i+j}\varpi([\nabla_i,\nabla_j],\nabla_0,\dots,\hat{\nabla}_i,\dots,\hat{\nabla}_j,\dots,\nabla_k),\nonumber
\end{align*}
for $\nabla_i\in\Gamma(\fD E)$, $\varpi\in\Omega^k_E$,
where $\hat{\cdot}$ denotes omission.  The action of $\fL_{\nabla}\varpi:=d_{\fD E}\iota_{\nabla}\varpi+\iota_{\nabla}d_{\fD E}\varpi$, gives a Lie derivative on $\Gamma(\wedge^\bullet \fJ^1E,E)$, satisfying an analogue of the Cartan relations.

From this construction the omni-Lie algebroid on a line bundle $L\rightarrow M$ is given as: 
\begin{subequations}\label{omnilie}
	\begin{align}
	(\nabla_1,\psi_1)\circ^L(\nabla_2,\psi_2)=&\Big([\nabla_1,\nabla_2], \fL_{\nabla_1}\psi_2-\iota_{\nabla_2}d_{\fD L}\psi_1\Big);\label{omnibracket}\\
	\langle\langle (\nabla_1,\psi_1),(\nabla_2,\psi_2)\rangle\rangle=&\langle\nabla_1,\psi_2\rangle_L+\langle\nabla_2,\psi_1\rangle_L;\\
	\rho(\nabla,\psi)=&\nabla,
	\end{align}
\end{subequations}
for $\nabla\in\Gamma(\fD L)$, $\psi\in\Gamma(\fJ^1 L)$.  The bracket \eqref{omnibracket} can be identified with \eqref{RedCourant} with $H_3=0$.  In the case of a trivial line bundle $H_2=F=0$ has already been noted \cite{Vit15a}. If the line bundle is non-trivial then $F=H_2$ is given by the curvature of a connection specifying the bundle.  

Having identified the Courant algebroids \eqref{omnibracket} with \eqref{RedCourant}, the identification of generalised contact structures as generalised complex structures is straightforward.  The generalised complex structure $\cJ|_{\sfS^1}\in\End(TP\oplus T^*P)$ can be identified with $\cI\in \End(\DD L)$, satisfying $\cI^2=-\text{id}$, and $\cI^*=-\cI$, by splitting the sequences \eqref{splitDL} and \eqref{splitJL}. The generalised complex structure $\cI$ is identified with a Dirac structure $L_\cI\subset \DD L$, and described by a pure spinor $\varpi\in \Gamma(\wedge^\bullet\fJ^1L,L)$.  A choice of decomposition $\Gamma(\fJ^1L)=C^\infty(M)\oplus \Gamma(T^*M)$ coming from \eqref{splitJL}, induces a decomposition $\varpi\in\Gamma(\wedge^\bullet\fJ^1L,L)$ into a mixed pair $(\varphi,\psi)\in \Gamma(\wedge^\bullet T^*M,\wedge^{\bullet-1}T^*M)$.

\section*{Acknowledgements}
I would like to thank Peter Bouwknegt for many helpful discussions throughout the project, and for his comments on draft versions of this note.  This research was supported by the Australian government through an Australian Postgraduate Award, and the Australian Research Council’s Discovery Projects funding scheme (project DP150100008).




\begin{thebibliography}{99}
	\bibitem{Ald13}
	M.~Aldi, and D.~Grandini,
	{\it Generalized contact geometry and T-duality},
	\href{http://www.sciencedirect.com/science/article/pii/S0393044015000352?}{J. Geom. Phys. {\bf 92} (2015) 78--93},
	\href{https://arxiv.org/abs/1312.7471v3}{[{arXiv:1312.7471}]}.
	
	\bibitem{Bar11}
	D.~Baraglia,
	{\it Leibniz algebroids, twisting and exceptional generalized geometry},
	\href{http://www.sciencedirect.com/science/article/pii/S0393044012000174?via%3Dihub}{J. Geom. Phys {\bf 62} (2012) 903--934},
	\href{https://arxiv.org/abs/1101.0856}{[{arXiv:1101.0856}]}.
	
	\bibitem{Baz14}
	G.~Bazzoni, and J.~Oprea,
	{\it On the structure of co-K\"ahler manifolds},
	\href{https://link.springer.com/article/10.1007/s10711-013-9869-7}{ J. Geom Dedicata {\bf 170} (2014)},
	\href{https://arxiv.org/abs/1209.3373}{[{arXiv:1209.3373}]}.
	
	\bibitem{Bou05}
	P.~Bouwknegt,
	{\it unpublished}.
	
	\bibitem{Bou10}
	P.~Bouwknegt,
	{\it Lectures on Cohomology, T-duality and Generalized Geometry}, 
	in the proceedings of the the Summer School ``New Paths Towards 
	Quantum Gravity'', Holb\ae k, Denmark, 10-16 May 2008, 
	Editors: B. Booss-Bavnbek, G. Esposito and M. Lesch, 
	Lecture Notes in Physics {\bf 807} (2010) 261--311.
	
	\bibitem{Bou04a}
	P.~Bouwknegt, J.~Evslin, and V.~Mathai,
	{\it T-duality: topology change from $H$-flux},
	\href{http://link.springer.com/article/10.1007/s00220-004-1115-6}{Comm. Math. Phys.  {\bf 249} (2004) 383--415},
	\href{https://arxiv.org/abs/hep-th/0306062}{[{arXiv:hep-th/0306062}]}.
	
	\bibitem{Bou04b}
	P.~Bouwknegt, J.~Evslin, and V.~Mathai,
	{\it Topology and $H$-flux of T-dual manifolds},
	\href{http://link.aps.org/doi/10.1103/PhysRevLett.92.181601}{Phys. Rev. Lett. {\bf 98} (2004) 181601},
	\href{https://arxiv.org/abs/hep-th/0312052}{[{arXiv:hep-th/0312052}]}.
	
	
	\bibitem{Cav04}
	G.~Cavalcanti, and M.~Gualtieri,
	{\it Generalized complex manifolds on nilmanifolds},
	\href{http://intlpress.com/site/pub/pages/journals/items/jsg/content/vols/0002/0003/a005/index.html}{J. Symplectic Geom. {\bf 3} (2004) 393--410},
	\href{https://arxiv.org/abs/math/0404451}{[{arXiv:math/0404451}]}.
	
	\bibitem{Cav10}
	G.~Cavalcanti, and M.~Gualtieri,
	{\it Generalized complex geometry and T-duality},
	A Celebration of the Mathematical Legacy of Raoul Bott (CRM Proceedings \& Lecture Notes), American Mathematical Society (2010) 341--366,
	\href{https://arxiv.org/abs/1106.1747}{[{arXiv:1106.1747}]}.
	
	\bibitem{Che07}
	Z.~Chen, and Z.-J. Liu,
	{it Omni-Lie algebroids},
	\href{http://www.sciencedirect.com/science/article/pii/S0393044010000306}{
	J. Geom. Phys. {\bf 60} (2010) 799--808},
	\href{https://arxiv.org/abs/0710.1923}{[{arXiv:0710.1923}]}.
	
	\bibitem{Che08}
	Z.~Chen, Z.~Liu, and Y.~Sheng,
	{\it E-Courant algebroids},
	\href{https://academic.oup.com/imrn/article-lookup/doi/10.1093/imrn/rnq053}{Int. Math. Res. Not. {\bf 22} (2010) 4334--4376},
	\href{https://arxiv.org/abs/0805.4093}{[{arXiv:0805.4093}]}.
	
%
	\bibitem{Gat84}
	S.J.~Gates, C.M.~Hull, and M.~Ro\v{c}ek,
	{\it Twisted multiplets and new supersymmetric non-linear $\sigma$-models},
	\href{http://www.sciencedirect.com/science/article/pii/0550321384905923}{Nuclear Physics B {\bf 248}(1) (1984) 157--186}.
	
	\bibitem{Gom13}
	R.~Gomez, and J.~Talvacchia,
	{\it On products of generalized geometries},
	\href{https://link.springer.com/article/10.1007/s10711-014-0036-6}{Geometriae Dedicata {\bf 175} (2015) 211--218},
	\href{https://arxiv.org/abs/1311.0381}{[{arXiv:1311.0381}]}.
	
	\bibitem{Gom15}
	R.~Gomez, and J.~Talvacchia,
	{\it Generalized coK\"ahler Geometry and Application to Generalized K\"ahler structures},
	\href{https://www.sciencedirect.com/science/article/pii/S039304401500217X?via%3Dihub}{J. Geom. Phys. {\bf 98} (2015) 493--503},
	\href{https://arxiv.org/abs/1502.07046}{[arXiv:1502.07046]}.
	
	\bibitem{Gra13}
	J.~Grabowski,
	{\it Graded contact bundles and contact Courant algebroids},
	\href{http://www.sciencedirect.com/science/article/pii/S039304401300017X}{J. Geom. Phys. {\bf 68} (2013) 27--58},
	\href{https://arxiv.org/abs/1112.0759}{[{arXiv:1112.0759~[math.DG]}]}.
	
	\bibitem{Gua04}
	M.~Gualtieri,
	{\it Generalized complex geometry},
	\href{http://annals.math.princeton.edu/2011/174-1/p03}{Ann. Math. {\bf 174} (2011) 75--123},
	\href{http://lanl.arxiv.org/abs/math/0401221v1}{[{arXiv:math/0401221~[math.DG]}]}.
	
	\bibitem{Hit02}
	N.~Hitchin,
	{\it Generalized Calabi-Yau manifolds},
	\href{https://academic.oup.com/qjmath/article-lookup/doi/10.1093/qmath/hag025}{Quart. J. Math {\bf 54} (2003) 281--308}, 
	\href{https://arxiv.org/abs/math/0209099}{[{arXiv:math/0209099}]}.
	
	\bibitem{Igl01}
	D.~Iglesias, and J.C.~Marrero,
	{\it Lie algebroid foliations and $\cE^1(M)$-Dirac structures},
	\href{http://iopscience.iop.org/article/10.1088/0305-4470/35/18/307/meta;jsessionid=9EA9A84187D41DA0C5A7FCFB9486DF38.c4.iopscience.cld.iop.org}{J. Phys. A:Math.Gen. {\bf 35} (2002)},
	\href{https://arxiv.org/abs/math/0106086}{[{arXiv:math/0106086~[math.DG]}]}.
	
	\bibitem{Igl04}
	D.~Iglesias, and A.~Wade,
	{\it Contact manifolds and generalized complex structures},
	\href{http://www.sciencedirect.com/science/article/pii/S0393044004001007?}{J. Geom. Phys. {\bf 53} (2005) 249--258},
	\href{https://arxiv.org/abs/math/0404519}{[{arXiv:math/0404519~[math.DG]}]}.
	
	\bibitem{Kos04}
	Y.~Kosmann-Schwarzbach,
	{\it Derived Brackets},
	\href{https://link.springer.com/article/10.1007%2Fs11005-004-0608-8}{Lett. Math. Phys. {\bf 69} (2004) 61--87},
	\href{https://arxiv.org/abs/math/0312524}{[{arXiv:math/0312524}]}.	
	
	\bibitem{Li08}
	H.~Li,
	{\it Topology of Co-symplectic/Co-K\"ahler manifolds},
	\href{http://intlpress.com/site/pub/pages/journals/items/ajm/content/vols/0012/0004/a007/index.html}{Asian J. Math. {\bf 12(4)} (2008) 527-–544}.
	
	\bibitem{Mat04}
	V.~Mathai, and J.~Rosenberg,
	{\it T-Duality for Torus Bundles with $H$-fluxes via Noncommutative Topology},
	\href{https://link.springer.com/article/10.1007/s00220-004-1159-7}{Comm. Math. Phys. {\bf 253(3)} (2005) 705--721},
	\href{https://arxiv.org/abs/hep-th/0401168}{[{arXiv:hep-th/0401168}]}.
	
%
	\bibitem{Poo09}
	Y.S.~Poon, and A.~Wade,
	{\it Generalized Contact Structures},
	\href{http://onlinelibrary.wiley.com/doi/10.1112/jlms/jdq069/abstract;jsessionid=FD934CE172DE9BDF05DB3D39C70A50F9.f04t03}{J. London Math. Soc. (2011) 333--352},
	\href{https://arxiv.org/abs/0912.5314v1}{[arXiv:0912.5314~[math.DG]]}.
	
	\bibitem{Sek12}
	K.~Sekiya,
	{\it Generalized almost contact structures and generalized Sasakian structures},
	\href{http://projecteuclid.org/euclid.ojm/1427202871}{Osaka J. Math. {\bf 52} (2015)},
	\href{https://arxiv.org/abs/1212.6064v1}{[{arXiv:1212.6064~[math.DG]}]}.
	
	\bibitem{Sev98}
	P.~\v{S}evera,
	{\it Letters to Alan Weinstein about Courant algebroids},
	\href{https://arxiv.org/abs/1707.00265}{[{arXiv:1707.00265}]}.
	
	
	\bibitem{Vai07}
	I.~Vaisman,
	{\it Dirac structures and generalized complex structures on $TM\times \RR^h$},
	\href{https://www.degruyter.com/view/j/advg.2007.7.issue-3/advgeom.2007.029/advgeom.2007.029.xml}{Adv. Geom. {\bf 7} (2007) 453--474},
	\href{https://arxiv.org/abs/math/0607216}{[{arXiv:math/0607216}]}.
	
	\bibitem{Vai08}
	I.~Vaisman, 
	{\it Generalized CRF-structures},
	\href{https://link.springer.com/article/10.1007/s10711-008-9239-z}{Geom. Dedicata {\bf 133} (2008) 129--154},
	\href{https://arxiv.org/abs/0705.3934}{[{arXiv:0705.3934}]}.
	
	
	\bibitem{Vit15a}
	L.~Vitagliano,
	{\it Dirac-Jacobi bundles},
	\href{https://arxiv.org/abs/1502.05420}{[{arXiv:1502.05420}]}.
	
	\bibitem{Vit15b}
	L.~Vitagliano, and A.~Wade,
	{\it Generalized contact bundles},
	\href{http://www.sciencedirect.com/science/article/pii/S1631073X15003374?via%3Dihub}{Comptes Rendus Mathematique {\bf 354} (2016) 313--317},
	\href{https://arxiv.org/abs/1507.03973}{[{arXiv:1507.03973}]}.
	

\end{thebibliography}
\end{document}